\def\a{\alpha}
\def\b{\beta}
\def\g{\gamma}
\def\vp{\varphi}
\def\G{\Gamma}
\def\D{\triangle}
\def\r{\rho}
\def\th{\theta}
\def\l{\lambda}
\def\r{\rho}
\def\ep{\epsilon}
\def\m{\mu}
\def\o{\omega}
\def\ep{\varepsilon}
\def\f{\rightarrow}
\def\tr{\triangleright}
\def\v{\vdash}
\def\ou{\vee}
\def\et{\wedge}
\def\<{\langle}
\def\>{\rangle}
\def\F{\displaystyle\frac}
\newtheorem{theorem}{Theorem}[section]
\newtheorem{lemma}{Lemma}[section]
\newtheorem{definition}{Definition}[section]
\newtheorem{notation}{Notation}[section]
\begin{document}
\vspace*{0cm}

\begin{center}
{\Large\bf Strong normalization results by translation}\\[1cm]
\end{center}

\begin{center}
{\bf Ren\'e DAVID and Karim NOUR}\\
LAMA - Equipe LIMD\\
 Universit\'e de Chamb\'ery\\
73376 Le Bourget du Lac\\
e-mail: \{david,nour\}@univ-savoie.fr\\[1cm]
\end{center}

\begin{abstract}
We prove the strong normalization of full classical natural
deduction (i.e. with conjunction, disjunction and permutative
conversions) by using a translation into the simply typed
$\l\m$-calculus. We also extend Mendler's result on recursive
equations to this system.
\end{abstract}

\section{Introduction}

It is well known that, when the underlying logic is the classical one
(i.e. the absurdity rule is allowed) the connectives $\ou$ and $\et$
are redundant (they can be coded by using $\f$ and $\perp$). From a
logical point of view, considering the full logic is thus somehow
useless. However, from the computer science point of view, considering
the full logic is interesting because, by the so-called Curry-Howard
correspondence, formulas can be seen as types for functional
programming languages and correct programs can be extracted from
proofs. The connectives $\et$ and $\ou$ have a functional counter-part
($\et$ corresponds to a product and $\ou$ to a co-product, i.e. a {\em
case of}) and it is thus useful to have them as primitive.

In this paper, we study the typed $\l\m^{\f\et\ou}$-calculus. This
calculus,
 introduced  by de Groote in
  \cite{deGr}, is an
  extension of  Parigot's $\l\m$-calculus. It is the computational counterpart  of classical
natural deduction with $\f$, $\et$ and $\ou$. Three notions of
conversions are necessary in order to have the sub-formula
property : logical, classical and permutative conversions.

The proofs of the strong normalization of the cut-elimination
procedure  for the full classical logic are quite recent and three
kinds of proofs are given in the literature.


{\em Proofs by CPS-translation}.  In \cite{deGr} de Groote also gave
a proof of the strong normalization of the typed
$\l\m^{\f\et\ou}$-calculus using a CPS-translation into the simply
typed $\l$-calculus i.e. the implicative intuitionistic logic but his
proof contains an error as Matthes pointed out in
\cite{Mat1}. Nakazawa and Tatsuta corrected de Groote's proof in
\cite{NaTa} by using the notion of augmentations.


{\em Syntactical proofs}. We gave in \cite{DaNo2} a direct and
syntactical proof of strong norma-lization. The proof is based on a
substitution lemma which stipulates that replacing in a strongly
normalizable deduction an hypothesis by another strongly normali-zable
deduction gives a strongly normalizable deduction. The proof uses a
technical lemma concerning commutative reductions. But, though the
idea of the proof of this lemma (as given in \cite{DaNo2}) works, it
is not complete and (as pointed out by Matthes in a private
communication) it also contains some errors.


{\em Semantical proofs}. K. Saber and the second author gave in
\cite{NaSa} a semantical proof of this result by using the notion of
saturated sets. This proof is a generalization of Parigot's strong
normalization result of the $\lambda\mu$-calculus with the types of
Girard's system ${\cal F}$ by using reducibility candidates. This
proof uses the technical lemma of \cite{DaNo2} concerning commutative
reductions. In \cite{Mat2} and \cite{Tats}, R. Matthes and Tastuta
give another semantical proofs by using a (more complex) concept of
saturated sets.

\medskip

This paper presents  a new proof of the strong normalization of
the simply typed $\l\m^{\f\et\ou}$-calculus. This proof is
formalizable in Peano first order arithmetic and does not need any
complex lemma. It is obtained by giving a translation of this
calculus into the $\l\m$-calculus. The coding of $\et$ and $\ou$
in classical logic is the usual one but, as far as we know, the
fact that this coding behaves correctly with the computation, via
the Curry-Howard correspondence, has never been analyzed. This
proof is much simpler than the existing  ones\footnote{Recently,
we have been aware of a paper by Wojdyga \cite{woj} who uses the
same kind of translations but where all the atomic types are
collapsed to $\bot$. Our translation allows us to extend trivially
Mendler's result whereas the one of Wojdyga, of course, does
not.}.

It also presents a new result. Mendler \cite{Mend2} has shown that
strong normalization is preserved if, on types, we allow some
equations satisfying natural (and necessary) conditions.  Mendler's
result concerned the implicative fragment of intuitionistic logic.  By
using the previous translation, we extend here this result to full
classical logic .

The paper  is organized as follows. Section \ref{2} gives the
various systems for which we prove the strong normalization.
Section \ref{6} gives the translation of the
$\l\m^{\f\et\ou}$-calculus into the $\l\m$-calculus and section
\ref{7} extends Mendler's theorem to the
$\l\m^{\f\et\ou}$-calculus. For a first reading, sections \ref{3},
\ref{4} and \ref{rab} may be skipped. They have been added to have
complete proofs of the other results. Section \ref{3} contains the
proof, by the first author, of the the strong normalization of the
simply typed $\l$-calculus. Section \ref{4} gives a translation of
the $\l\m$-calculus into the $\l$-calculus and section \ref{rab}
gives some well known properties of the $\l\m$-calculus. Finally,
the appendix gives a detailed proof of a lemma that needs a long
but easy case analysis.

\section{The systems}\label{2}

\begin{definition}
Let ${\cal V}$ and ${\cal W}$ be disjoint sets of variables.
\begin{enumerate}
  \item The set of $\l$-terms is defined by the following grammar

$${\cal M} := {\cal V} \ | \ \l {\cal V}.  {\cal M} \ | \ ({\cal M} \; {\cal M})$$

  \item The set of $\l\m$-terms is defined by the following grammar

$${\cal M}' := {\cal V}  \ | \ \l {\cal V}.  {\cal M}' \ | \ ({\cal
    M}' \; {\cal M}') \ |  \ \m {\cal W}. {\cal M}' \ |  \ ({\cal W} \; {\cal M}')$$

\item The set of $\l\m^{\f\et\ou}$-terms is defined by the following grammar

$${\cal M}'' ::= {\cal V} \ | \ \l {\cal V}. {\cal M}'' \ | \ ({\cal
M}'' \; {\cal E}) \ | \ \<{\cal M}'' , {\cal M}'' \> \ | \ \o_1 {\cal
M}'' \ | \ \o_2 {\cal M}'' \ | \ \mu {\cal W}. {\cal M}'' \ | \ ({\cal
W} \; {\cal M}'')$$

$${\cal E} ::=  {\cal M}'' \ | \  \pi_1 \ | \ \pi_2 \ | \ [{\cal V}.{\cal M}'' ,{\cal V}.{\cal M}'']$$

\end{enumerate}
\end{definition}

Note that, for the $\l\m$-calculus, we have adopted here the
so-called de Groote calculus which is the extension of Parigot's
calculus where the distinction between named and un-named terms is
forgotten. In this calculus, $\m\a$ is not necessarily followed by
$[\b]$. We also write $(\a \; M)$ instead of $[\a] M$.

\begin{definition}\label{def_red}

\begin{enumerate}
  \item The  reduction rule for the $\l$-calculus is the $\b$-rule.

  $$(\l x. M \; N) \tr_{\b} M[x:=N]$$
  \item The  reduction rules for the $\l\m$-calculus are the
  $\b$-rule and the $\m$-rule

  $$(\mu \a. M \; N) \tr_{\m}  \mu \a. M[(\a \;
  L) := (\a \; (L \; N))]$$

  \item The  reduction rules for the $\l\m^{\f\et\ou}$-calculus
  are those of the $\l\m$-calculus  together with the following rules

\begin{center}
$(\< M_1,M_2 \> \; \pi_i) \tr M_i$

 $(\o_i M\; [x_1. N_1 ,x_2. N_2]) \tr N_i[x_i:=M]$

 $(M \; [x_1.N_1 , x_2.N_2] \; \ep)
\tr (M \; [x_1.(N_1 \; \ep) , x_2.(N_2 \; \ep)])$

$(\mu \a. M \; \ep) \tr \mu \a. M[(\a \; N):= (\a \; (N \; \ep))]$
\end{center}
\end{enumerate}
\end{definition}

\begin{definition}
Let ${\cal A}$ be a set of atomic constants.
\begin{enumerate}

\item  The set ${\cal T}$ of types is defined by the following grammar

$${\cal T} ::= \; {\cal A}\cup \{\bot\} \; \mid
{\cal T} \f {\cal T}$$

\item The set ${\cal T}'$ of types is defined by the following grammar

$${\cal T}' ::= \; {\cal A}\cup \{\bot\} \; \mid \;   {\cal T}' \f
{\cal T}' \; \mid \; {\cal T}' \et {\cal T}' \; \mid \; {\cal T}'
\ou {\cal T}' $$

\end{enumerate}
As usual $\neg A$ is an abbreviation for $A \f \bot$.

\end{definition}

\begin{definition}
\begin{enumerate}
  \item A $\l$-context is a set of
declarations of the form $x : A$ where $x \in {\cal V}$, $A \in
{\cal T}$ and where a variable may occur at most once.
  \item A $\l\m$-context is a set of
declarations of the form $x : A$ or $\a : \neg B$ where $x \in
{\cal V}$, $\a \in {\cal W}$, $A,B \in {\cal T}$ and where a
variable may occur at most once.
  \item A $\l\m^{\f\et\ou}$-context is a set of
declarations of the form $x : A$ or $\a : \neg B$ where $x \in
{\cal V}$, $\a \in {\cal W}$, $A,B \in {\cal T}'$ and where a
variable may occur at most once.
\end{enumerate}

\end{definition}

\begin{definition}
\begin{enumerate}
\item The simply typed $\l$-calculus (denoted ${\cal S}$) is defined
by the following typing rules where $\G$ is a $\l$-context,

\begin{center}

$\F{}{\G , x : A \v x : A} \, ax$ \hspace{0.5cm} $\F{\G, x: A \v M
: B} {\G \v \l
x.M : A \f B} \, \f_i$\\[0.5cm]

$\F{\G \v M : A \f B \quad \G \v N : A} {\G \v (M \; N)
: B }\, \f_e$
\end{center}
\item The simply typed $\l\m$-calculus (denoted ${\cal S}^{\m}$) is
obtained by adding to the previous rules (where $\G$ now is a
$\l\m$-context) the following rules.

\begin{center}
$\F{\G , \a : \neg A \v M : A} {\G , \a : \neg A\v (\a \; M) :
\bot} \bot_i$\hspace{0.5cm} $\F{\G , \a : \neg A \v M : \bot} {\G
\v \mu \a.M : A} \bot_e$
\end{center}
  \item The simply typed $\l\m^{\f\et\ou}$-calculus (denoted ${\cal
S}^{\f\et\ou}$) is defined by adding to the previous rules (where
$\G$ now is a $\l\m^{\f\et\ou}$-context) the following rules.

\begin{center}

$\F{\G \v M : A_1 \quad \G \v N : A_2} {\G \v \< M,N \>
: A_1 \et A_2} \, \et_i$ \hspace{0.5cm} $\F{\G \v M : A_1 \et A_2}
{\G \v  (M \; \pi_i): A_i } \, \et_e$
\medskip

$\F{\G \v M :
A_j} {\G \v \o_j M : A_1 \ou A_2} \, \ou_i$
\medskip

$\F{\G \v M : A_1 \ou A_2 \quad \G , x_1 : A_1 \v N_1 : C \quad
\G , x_2 : A_2 \v N_2 : C} {\G \v (M\;[x_1.N_1,
x_2.N_2]) : C} \, \ou_e$
\medskip

\end{center}

\item If $\approx$ is a congruence on ${\cal T}$ (resp. ${\cal T}'$),
we define the systems ${\cal S}_{\approx}$, (resp. ${\cal
S}^{\m}_{\approx}$, ${\cal S}^{\f\et\ou}_{\approx}$) as the system
 ${\cal S}$ (resp.  ${\cal S}^{\m}$,  ${\cal
S}^{\f\et\ou}$) where we have added the following typing rule.

$$\F{\G \v M : A \;\;\; A \approx B}{\G \v M : B} \, \approx$$

\end{enumerate}
\end{definition}

\begin{notation}
\begin{itemize}

\item We will denote by $size(M)$ the complexity of the term $M$.

\item Let $\overrightarrow{P}$ be a finite (possibly empty) sequence
of terms and $M$ be a term.  We denote by $(M \ \overrightarrow{P})$
the term $(M \ P_1 \ ... \ P_n)$ where $\overrightarrow{P}= P_1, ...,
P_n$.

\item In the rest of the paper $\tr$ will represent the reduction
determined by all the rules of the corresponding calculus.

\item If we want to consider only some of the rules we will mention
them as a subscript of $\tr$. For example, in the
$\l\m^{\f\et\ou}$-calculus, $M \tr_{\b\m} N$ means that $M$ reduces
to $N$ either by the $\b$-rule or by the $\m$-rule.

\item As usual, $\tr_r^*$ (resp. $\tr_r^+$) denotes the symmetric and
transitive closure of $\tr_r$ (resp. the transitive closure of
$\tr_r$). We denote $M \tr_{r}^1 N$ iff $M = N$ or $M \tr_r N$.

\item A term $M$ is strongly normalizable for a reduction $\tr_r$
(denoted as $M \in SN_r$) if there is no infinite sequence of
reductions $\tr_r$ starting from $M$. For $M \in SN_r$, we denote by
$\eta_r(M)$ the length of the longest reduction of $M$.

\item If $M \tr_r^* N$, we denote by $lg(M \tr_r^* N)$ the number of
steps in the reduction $M \tr_r^* N$. If  $M \tr^* N$, we denote
by $lg_r(M \tr^* N)$ the number of $\tr_r$ steps of the reduction
 in $M \tr^* N$.

\end{itemize}
\end{notation}

\section{Strong normalization of $\cal{S}$}\label{3}

This section gives a simple proof (due to the first author) of the
strong normalization of the simply typed $\l$-calculus.

\begin{lemma} \label{a}
Let $M,N, \overrightarrow{O} \in {\cal M}$. If $M,N,\overrightarrow{O}
\in SN_{\b}$ and $(M\;N\;\overrightarrow{O} )\not\in SN_{\b}$, then
$(M_{1}[x:=N]\;\overrightarrow{O})\not\in SN_{\b}$ for some $M_1$ such that
$M \tr_{\b}^* \l x.M_{1}$.
\end{lemma}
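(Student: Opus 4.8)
We have $\lambda$-terms $M, N, \vec{O}$, all strongly normalizing for $\beta$-reduction. We assume $(M\,N\,\vec{O})$ is NOT strongly normalizing. We want to conclude that there's some $M_1$ with $M \to_\beta^* \lambda x. M_1$ such that $(M_1[x:=N]\,\vec{O})$ is not strongly normalizing.

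**The intuition.** Since $M, N, \vec{O}$ are all SN individually, the non-termination of $(M\,N\,\vec{O})$ must come from interaction between them — specifically from a redex that gets created. The key insight: if $(M\,N\,\vec{O})$ is not SN, there's an infinite reduction. But $M$ is SN, so $M$ can only take finitely many steps. Eventually the head of $M$ must "resolve" — $M$ must reduce to an abstraction $\lambda x. M_1$ (since otherwise, if $M$ reduced to a variable or stayed stuck, the application $(M\,N\,\vec{O})$ would just be built from SN pieces reducing independently, and would be SN).

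**The structure of the argument.** Consider an infinite reduction sequence from $(M\,N\,\vec{O})$. Since $M$, $N$, and each $O_i$ are SN, reductions occurring entirely inside one of these subterms are finite. So the infinite sequence must involve "head" redexes — the leftmost application. For the head application $(M\,N)$ to ever reduce, $M$ must become an abstraction $\lambda x. M_1$ (reached by finitely many steps since $M \in SN_\beta$). At that point the head $\beta$-redex fires: $(\lambda x. M_1\, N) \to M_1[x:=N]$, leaving $(M_1[x:=N]\,\vec{O})$.

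So the plan is:

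First, I set up an infinite reduction sequence from $(M\,N\,\vec{O})$. I track the reductions according to where they occur: inside $M$, inside $N$, inside some $O_i$, or at the "head". Because $M, N, \vec{O} \in SN_\beta$, only finitely many reductions can occur strictly inside these subterms before some head-level reduction must occur. The crucial point is that the only way to create new redexes at the head is for $M$ to reduce to an abstraction and then contract the head redex.

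Second, the key claim is that along the infinite reduction, the head redex must eventually fire. I argue by contradiction: if it never fires, then all infinitely many reductions happen inside $M$, $N$, or the $O_i$, contradicting that these are all SN (a finite multiset of SN terms admits only finitely many total reduction steps). So at some point $M$ has reduced to some $\lambda x. M_1$ (with $M \to_\beta^* \lambda x. M_1$), $N$ has reduced to some $N'$, the $O_i$ to some $O_i'$, and the term has become $(\lambda x. M_1\, N'\,\vec{O'})$, which then head-reduces to $(M_1[x:=N']\,\vec{O'})$.

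Third, I must bridge from $(M_1[x:=N']\,\vec{O'})$ back to the claimed $(M_1[x:=N]\,\vec{O})$. Since the tail of the infinite reduction continues from $(M_1[x:=N']\,\vec{O'})$, this term is not SN. Now $(M_1[x:=N]\,\vec{O}) \to_\beta^* (M_1[x:=N']\,\vec{O'})$ because substitution and application are monotone under reduction ($N \to_\beta^* N'$ gives $M_1[x:=N] \to_\beta^* M_1[x:=N']$, and $\vec{O} \to_\beta^* \vec{O'}$). A term that reduces to a non-SN term is itself non-SN, so $(M_1[x:=N]\,\vec{O}) \notin SN_\beta$, as required.

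**The main obstacle.** The delicate point is the bookkeeping in the second step: rigorously isolating the "head reduction" and arguing that it must occur. One must be careful that reductions inside $M$ may themselves duplicate or erase parts when a head reduction fires, so the cleanest formulation is to pick the \emph{first} moment in the infinite sequence at which a head redex is contracted, and verify that up to that moment $M$ has been reducing internally to an abstraction $\lambda x. M_1$. I expect this is where the author spends most of the care, likely via an induction on $\eta_\beta(M) + \eta_\beta(N) + \sum \eta_\beta(O_i)$ or on the combined reduction length, rather than the more hand-wavy "infinite sequence" phrasing I sketched above.
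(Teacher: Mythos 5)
Your proposal is correct and takes essentially the same route as the paper: its proof likewise observes that, since $M,N,\overrightarrow{O} \in SN_{\b}$, the infinite reduction of $(M\;N\;\overrightarrow{O})$ must eventually contract the head redex, i.e.\ it has the form $(M\;N\;\overrightarrow{O}) \tr_{\b}^* (\l x.M_{1}\;N_{1}\;\overrightarrow{O_{1}}) \tr_{\b} (M_{1}[x:=N_{1}]\;\overrightarrow{O_{1}}) \tr_{\b}^* \ldots$, and then concludes exactly as you do from $(M_{1}[x:=N]\;\overrightarrow{O}) \tr_{\b}^* (M_{1}[x:=N_{1}]\;\overrightarrow{O_{1}})$. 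The more careful bookkeeping you anticipate at the end (an induction on reduction lengths) is not carried out in the paper either; its proof is precisely your sketch.
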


\begin{proof}
Since $M,N,\overrightarrow{O}\in SN_{\b}$, the infinite reduction of
$T=(M\;N\; \overrightarrow{O})$ looks like: $T\tr_{\b}^*(\l
x.M_{1}\;N_{1} \ \overrightarrow{O_{1}}\;)\tr_{\b}(M_{1}[x:=N_{1}]\;
\overrightarrow{O_{1}})\tr_{\b}^*\ldots $. The result immediately
follows from the fact that $(M_{1}[x:=N]\;\overrightarrow{O}
)\tr_{\b}^*(M_{1}[x:=N_{1}]\;\overrightarrow{O_{1}})$.
\end{proof}

\begin{lemma}\label{b}
If $M,N \in SN_{\b}$ are typed $\l$-terms, then $M[x:=N] \in SN_{\b}$.
\end{lemma}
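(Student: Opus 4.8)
The plan is to prove the statement by a double induction: an \emph{outer} induction on the complexity of the type $A$ assigned to $x$ (equivalently, the type of $N$), and, for each fixed $A$, an \emph{inner} induction on the pair $(\eta_{\b}(M), size(M))$ ordered lexicographically. Throughout I use that typing is preserved under $\tr_{\b}$ and under substitution, and I silently rename bound variables so that the variable bound by any abstraction differs from $x$ and does not occur free in $N$. Arguing by contradiction, I assume $M[x:=N] \notin SN_{\b}$ while $M, N \in SN_{\b}$.

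First I dispose of the non-applicative shapes of $M$. If $M$ is a variable $y \neq x$ the claim is trivial, and if $M = \l y. M_0$ then $M[x:=N] = \l y. M_0[x:=N]$, so it suffices to treat $M_0$, which has strictly smaller $size$ and no larger $\eta_{\b}$, by the inner induction hypothesis. Thus I may assume $M = (M_0 \; M_1 \; \ldots \; M_n)$ with $n \geq 1$, where the head $M_0$ is $x$, another variable, or an abstraction (the bare case $M=x$ is the subcase $n=0$ of head $x$). Since each $M_i$ (and, when $M_0 = \l z. M_0'$, also $M_0'$) is a proper subterm of $M$, the inner induction hypothesis gives $M_i[x:=N] \in SN_{\b}$ for all $i$ and $M_0[x:=N] \in SN_{\b}$. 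Hence $M[x:=N] = (M_0[x:=N] \; M_1[x:=N] \; \ldots \; M_n[x:=N])$ is an application all of whose immediate constituents lie in $SN_{\b}$, so Lemma~\ref{a} applies.

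Under our standing assumption, Lemma~\ref{a} tells us that the head $M_0[x:=N]$ reduces to an abstraction $\l w. U_1$ and that, after contracting the created redex against $M_1[x:=N]$, the term $(U_1[w := M_1[x:=N]] \; M_2[x:=N] \; \ldots \; M_n[x:=N])$ is still not in $SN_{\b}$. I split on the head of $M$. If $M_0 = y \neq x$, then $y$ has no abstraction among its reducts, so the hypothesis $M[x:=N]\notin SN_{\b}$ cannot hold and $M[x:=N]$ is in $SN_{\b}$. If $M_0 = \l z. M_0'$, then $M$ itself has the head redex $M \tr_{\b} M^{*} := (M_0'[z:=M_1] \; M_2 \; \ldots \; M_n)$, so $M^{*} \in SN_{\b}$ with $\eta_{\b}(M^{*}) < \eta_{\b}(M)$ and, by the inner induction hypothesis at the same type $A$, $M^{*}[x:=N] \in SN_{\b}$; but $M_0'[x:=N] \tr_{\b}^{*} U_1$ (with $w=z$) forces the non-normalizing term produced by Lemma~\ref{a} to be a $\tr_{\b}^{*}$-reduct of $M^{*}[x:=N]$, a contradiction.

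The remaining, and genuinely critical, case is $M_0 = x$, where $M_0[x:=N] = N$. Here the typing is decisive: since $N : A$ and $N \tr_{\b}^{*} \l w. U_1$, subject reduction gives $A = A_1 \f (A_2 \f \cdots \f A_n \f C)$ with $w : A_1$, so the type $A_1$ of the newly substituted variable is a \emph{strict} subformula of $A$. As $U_1 \in SN_{\b}$ (a subterm of a reduct of $N$) and $M_1[x:=N] \in SN_{\b}$, the outer induction hypothesis at the smaller type $A_1$ yields $U_1[w := M_1[x:=N]] \in SN_{\b}$. The resulting head has a type all of whose argument types $A_2, \ldots, A_n$ are again strict subformulas of $A$, so I iterate Lemma~\ref{a} down the finite spine, invoking the outer induction hypothesis at the types $A_2, \ldots, A_n$ in turn; after $n$ steps the spine is exhausted and the last term is simultaneously shown to be and not to be in $SN_{\b}$, the desired contradiction. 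The main obstacle is precisely this head-$x$ case: one must recognize that every redex created by the substitution is controlled by a type strictly simpler than $A$, which is exactly what licenses the outer induction on type complexity, and is where well-typedness, absent in the untyped calculus, is indispensable.
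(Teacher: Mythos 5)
Your proof is correct and takes essentially the same route as the paper: the same lexicographic induction on $(type(N),\eta_{\b}(M),size(M))$ (your nested outer/inner inductions), the same appeal to Lemma~\ref{a}, and the same case split on the head of $M$. The only difference is in the head-$x$ case, where the paper avoids your iteration of Lemma~\ref{a} along the spine (with the induction hypothesis invoked at each argument type $A_1,\dots,A_n$) by a single further use of the induction hypothesis at the result type, via the fresh-variable rewriting $M_{1}=(z\;\overrightarrow{O_{1}})[z:=N_{1}[y:=P_{1}]]$; both arguments are sound.
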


\begin{proof}
By induction on $(type(N),\eta_{\b}(M),size(M))$ where $type(N)$ is
the complexity of the type of $N$.  The cases $M=\l x.M_1$ and $M=(y\;
\overrightarrow{O})$ for $y \neq x$ are trivial.

\begin{itemize}
\item $M=(\lambda y.P\;Q\;\overrightarrow{O})$. By the induction
hypothesis, $P[x:=N],Q[x:=N]$ and $\overrightarrow{O}[x:=N]$ are
in $SN_{\b}$. By lemma \ref{a} it is enough to show that
$(P[x:=N][y:=Q[x:=N]]\; \overrightarrow{O}[x:=N])=M^{\prime
}[x:=N] \in SN_{\b}$ where $M^{\prime
}=(P[y:=Q]\;\overrightarrow{O})$. But $\eta_{\b}(M^{\prime
})<\eta_{\b}(M)$ and the result follows from the induction
hypothesis.

\item $M=(x\;P\;\overrightarrow{O})$. By the induction hypothesis,
$P_{1}=P[x:=N]$ and $\overrightarrow{O_{1}}=\overrightarrow{O}[x:=N]$
are in $SN_{\b}$. By lemma \ref{a} it is enough to show that if $N
\tr_{\b}^*\l y.N_{1}$ then $M_{1}=(N_{1}[y:=P_{1}]\;
\overrightarrow{O_{1}}) \in SN_{\b}$. By the induction hypothesis
(since $ type(P_{1})<type(N)$) $N_{1}[y:=P_{1}] \in SN_{\b}$ and thus,
by the induction hypothesis (since
$M_{1}=(z\;\overrightarrow{O_{1\;}})\;[z:=N_{1}[y:=P_{1}]]$ and
$type(N_{1})<type(N)$) $M_{1} \in SN_{\b}$.
\end{itemize}
\end{proof}

\begin{theorem}\label{sn_rd}
The simply typed $\l$-calculus is strongly normalizing.
\end{theorem}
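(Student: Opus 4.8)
The plan is to prove strong normalization of the simply typed $\l$-calculus as an immediate consequence of the substitution Lemma \ref{b}. The natural strategy is a straightforward induction on the structure of the typed term $M$, using the fact that a term is in $SN_\b$ precisely when all its immediate reducts terminate, and that the only obstacle to strong normalization is the creation of redexes through substitution.

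First I would argue by induction on $size(M)$ that every typed $\l$-term $M$ is in $SN_\b$. The base case $M = x$ is trivial, since a variable has no $\b$-redex. For $M = \l x. M_1$, the induction hypothesis gives $M_1 \in SN_\b$, and since every reduction of $\l x. M_1$ is a reduction inside $M_1$, we get $M \in SN_\b$ directly. The interesting case is the application $M = (P\;Q)$. Here the induction hypothesis applied to the immediate subterms $P$ and $Q$ (both smaller than $M$ and typable in the same context) yields $P, Q \in SN_\b$.

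The key step is to turn $P, Q \in SN_\b$ into $(P\;Q) \in SN_\b$. If $P$ never reduces to an abstraction, then any infinite reduction of $(P\;Q)$ would have to be an infinite reduction inside $P$ or inside $Q$, contradicting $P, Q \in SN_\b$; so the only danger is that $P \tr_\b^* \l x. P_1$, creating a head redex. In that case I would invoke Lemma \ref{b}: since $P_1$ and $Q$ are themselves in $SN_\b$ (being subterms/reducts of strongly normalizing typed terms) and the application is typable, the substitution $P_1[x:=Q]$ lies in $SN_\b$. One then checks that an infinite reduction of $(P\;Q)$ must eventually fire such a head redex and continue from a term of the form $(P_1[x:=Q_1]\;\ov{O})$ arising from a strongly normalizing term, which is impossible. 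This is essentially the same bookkeeping as in Lemma \ref{a}, which already isolates exactly how an infinite reduction of an application forces an infinite reduction of a head-substituted instance.

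The main obstacle I anticipate is precisely this last combinatorial point: ensuring that finiteness of the reductions of $P$, of $Q$, and of every substitution instance produced by firing a head redex really does bound the whole reduction of $(P\;Q)$, rather than merely each phase of it. Lemma \ref{b} has been proved by an induction on $(type(N),\eta_\b(M),size(M))$ that already encapsulates this interaction between substitution and typing, so the cleanest route is to deduce the theorem almost verbatim from it: given any typed $M$, write $M$ as a head variable or head redex applied to arguments, and apply Lemma \ref{b} together with the structural induction to conclude $M \in SN_\b$. The typing hypothesis is what makes the measure $type(N)$ decrease and thus blocks the pathological reductions that untyped terms (such as $\Omega = (\l x.(x\;x)\;\l x.(x\;x))$) exhibit.
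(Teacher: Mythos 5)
Your proposal is correct and shares the paper's skeleton: a structural induction on $M$ in which Lemma \ref{b} does all the work in the application case. Where you differ is in how Lemma \ref{b} is invoked, and the paper's way is noticeably lighter. You argue by contradiction: if $(P\;Q)\not\in SN_{\b}$ then Lemma \ref{a} (with $\ov{O}$ empty) yields $P_{1}$ such that $P\tr_{\b}^{*}\l x.P_{1}$ and $P_{1}[x:=Q]\not\in SN_{\b}$, contradicting Lemma \ref{b}. For that last step you need $P_{1}$ to be a \emph{typed} term (Lemma \ref{b} is stated for typed terms only), so your route silently appeals to subject reduction for the reduct $\l x.P_{1}$ of $P$ --- a true and standard fact, but one nowhere stated or proved in the paper. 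The paper instead disposes of the application case in one line, with no case analysis on whether $P$ reaches an abstraction and no appeal to typability of reducts: write $M=(N\;P)=(z\;P)[z:=N]$ for a fresh variable $z$. The term $(z\;P)$ is in $SN_{\b}$ because all its reductions occur inside $P$ (which is $SN_{\b}$ by the induction hypothesis), and it is typable outright in the context extended by a declaration for $z$; since $N$ is also typed and in $SN_{\b}$ by the induction hypothesis, Lemma \ref{b} gives $M=(z\;P)[z:=N]\in SN_{\b}$ directly. So both arguments are sound and rest on the same key lemma; yours redoes the head-reduction bookkeeping of Lemma \ref{a} at the theorem level and imports subject reduction, while the paper's fresh-variable substitution trick makes the theorem an immediate corollary of Lemma \ref{b}.
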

\begin{proof}
By induction on $M$. The cases $M=x$ or $ M=\lambda x.P$ are
trivial. If $M=(N \ P)=(z\;P)[z:=N]$ this  follows from  lemma
\ref{b} and the induction hypothesis.
\end{proof}

\section{A translation of the $\l\m$-calculus into the
$\l$-calculus}\label{4}

We give here a translation of the simply typed $\l\m$-calculus into
the simply typed $\l$-calculus. This translation is a simplified
version of Parigot's translation in \cite{Par2}. His translation uses
both a translation of types (by replacing each atomic formula $A$ by
$\neg\neg A$) and a translation of terms.
But it is known that, in the implicative fragment of propositional
logic, it is enough to add $\neg\neg$ in front of the rightmost
variable. The translation we have chosen consists in decomposing
the formulas (by using the terms $T_A$) until the rightmost
variable is found and then using the constants $c_X$ of type
$\neg\neg X \f X$. With such a translation the type does not
change.

Since the translation of a term of the form $\m\a.M$ uses the type of
$\a$, a formal presentation of this translation would need the use of
$\l$-calculus and $\l\m$-calculus\\ \`a la Church. For simplicity of
notations we have kept a presentation \`a la Curry, mentioning the
types only when it is necessary.

We extend the system ${\cal S}$ by adding, for each propositional
variable $X$,  a constant $c_X$. When  the constants that occur in
a term $M$ are $c_{X_1},...,c_{X_n}$,  the notation $\G \v_{{\cal
S}^c} M : A$ will mean $\G,c_{X_1}:\neg \neg X_1 \f
X_1,...,c_{X_n}:\neg \neg X_n \f X_n \v_{{\cal S}} M : A$.

\begin{definition}
 For every $A \in {\cal T}$, we define a $\l$-term $T_A$ as follows:
\begin{itemize}
\item $T_{\bot} = \l x. (x \; \l y.y)$
\item $T_X = c_X$
\item $T_{A \f B} = \l x.\l y. (T_B \; \l u.(x \; \l v. (u \; (v \;
y))))$
\end{itemize}
\end{definition}

\begin{lemma}\label{tran}
For every $A \in {\cal T}$, $\v_{{\cal S}^c} T_A : \neg \neg A \f A$.
\end{lemma}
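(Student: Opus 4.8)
The plan is to prove the statement by a straightforward induction on the structure of the type $A \in \ct$, following exactly the recursive definition of $T_A$. There will be two base cases ($A = \bot$ and $A = X$ atomic) and one inductive step ($A = B \f C$), and the only place I expect to invoke the induction hypothesis is on the \emph{codomain} of an arrow type. This reflects the remark made above that, in the implicative fragment, it suffices to insert $\neg\neg$ in front of the rightmost variable: the term $T_A$ decomposes $A$ from the left, peeling off domains, and recurses only into the codomain until it reaches an atomic type or $\bot$.

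For the base cases I would argue directly. When $A = X$ is an atomic constant, $T_X = c_X$ and the constant $c_X$ was added to the system with precisely the type $\neg\neg X \f X$, so there is nothing to do. When $A = \bot$, I would check that $T_\bot = \l x.(x\;\l y.y)$ receives the type $\neg\neg\bot \f \bot$: assuming $x : \neg\neg\bot = (\bot\f\bot)\f\bot$ and observing that $\l y.y : \bot\f\bot$, the application $(x\;\l y.y)$ has type $\bot$, and abstracting over $x$ gives the claimed type.

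For the inductive step $A = B \f C$, I would take as induction hypothesis that $\v_{{\cal S}^c} T_C : \neg\neg C \f C$ and then type-check the body of $T_{B\f C} = \l x.\l y.(T_C\;\l u.(x\;\l v.(u\;(v\;y))))$ in the context $x : \neg\neg(B\f C)$, $y : B$. Reading the term from the inside out: with $v : B\f C$ and $y : B$ one gets $(v\;y) : C$; with $u : \neg C$ one then gets $(u\;(v\;y)) : \bot$; abstracting $v$ yields $\l v.(u\;(v\;y)) : \neg(B\f C)$; applying $x : ((B\f C)\f\bot)\f\bot$ gives $(x\;\l v.(u\;(v\;y))) : \bot$; abstracting $u$ yields a term of type $\neg\neg C$; and finally the induction hypothesis lets $T_C$ turn this into a term of type $C$. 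Abstracting over $y$ and then $x$ produces the type $\neg\neg(B\f C) \f (B\f C)$, as required.

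I do not expect any genuine obstacle: the lemma amounts to a type-annotation check on a single fixed term, and the induction is well founded because each recursive occurrence of $T$ is applied to a strictly smaller type. The only point needing care is bookkeeping — keeping the nested negations straight and being explicit about which sub-term is typed in which extended context — but the derivation is otherwise entirely mechanical.
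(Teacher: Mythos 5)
Your proof is correct and follows exactly the paper's approach: the paper proves Lemma \ref{tran} by induction on the structure of $A$, which is precisely the induction you carry out, with the details of the type-checking filled in correctly (including the key point that the induction hypothesis is only needed on the codomain of an arrow type).
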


\begin{proof}
By induction on $A$.
\end{proof}

\begin{definition}
\begin{enumerate}
\item We associate to each $\m$-variable $\a$ of type ${\neg A}$ a $\l$-variable $x_{\a}$ of type $\neg A$.
\item A typed $\l\m$-term $M$ is translated into an $\l$-term $M^{\diamond}$ as follows:
\begin{itemize}
\item $\{x\}^{\diamond} = x$
\item $\{\l x. M\}^{\diamond} = \l x. M^{\diamond}$
\item $\{(M \; N)\}^{\diamond} = (M^{\diamond} \; N^{\diamond})$
\item $\{\m \a. M\}^{\diamond} = (T_A \;\; \l x_{\a}. M^{\diamond})$ if the type of $\a$ is $\neg A$
\item $\{(\a \; M)\}^{\diamond} = (x_{\a} \; M^{\diamond})$
\end{itemize}
\end{enumerate}
\end{definition}

\begin{lemma}
\begin{enumerate}
\item $M^{\diamond}[x :=N^{\diamond}] = \{M[x:=N]\}^{\diamond}$.
\item $M^{\diamond}[x_{\a} := \l v. (x_{\a} \; (v \; N^{\diamond}))]
 \tr_{\beta}^* \{M[(\a \; L) := (\a \; (L \; N))]\}^{\diamond}$.
\end{enumerate}
\end{lemma}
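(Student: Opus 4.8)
The plan is to prove both statements by induction on the structure of the $\l\m$-term $M$, since the translation $\{\cdot\}^{\diamond}$ is defined by recursion on term structure. Statement (1) is the purely functional substitution lemma and should be the routine base; statement (2) is the genuine content, because it expresses the fact that the $\m$-substitution of the source calculus — replacing every named subterm $(\a\;L)$ by $(\a\;(L\;N))$ — is simulated, after translation, by an \emph{ordinary} $\l$-substitution on the associated variable $x_{\a}$, up to $\b$-reduction.

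First I would dispatch (1) by a straightforward structural induction on $M$, checking each clause of the definition of $\{\cdot\}^{\diamond}$. The variable, abstraction and application cases are immediate from the homomorphic clauses; the case $M=\m\b.M'$ uses that $x$ is a $\l$-variable distinct from the $x_\b$ introduced by translation, so the substitution commutes with the constant $T_A$; and the case $M=(\b\;M')$ is handled by the induction hypothesis on $M'$. I would note in passing that (1) is needed as an auxiliary inside the inductive proof of (2), so it must be established first.

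For (2) I would again induct on $M$, writing $\sigma$ for the $\l$-substitution $[x_{\a}:=\l v.(x_{\a}\;(v\;N^{\diamond}))]$ and $\theta$ for the $\m$-substitution $[(\a\;L):=(\a\;(L\;N))]$ on the source side. The only clause where $\sigma$ does real work is the named-application case $M=(\a\;M')$: here $\{M\}^{\diamond}=(x_{\a}\;\{M'\}^{\diamond})$, so $\{M\}^{\diamond}\sigma=(\l v.(x_{\a}\;(v\;N^{\diamond}))\;\{M'\}^{\diamond}\sigma)$, which $\b$-reduces in one step to $(x_{\a}\;(\{M'\}^{\diamond}\sigma\;N^{\diamond}))$; on the other side $M\theta=(\a\;(M'\theta\;N))$ translates to $(x_{\a}\;(\{M'\theta\}^{\diamond}\;N^{\diamond}))$, and the induction hypothesis gives $\{M'\}^{\diamond}\sigma\tr_{\b}^*\{M'\theta\}^{\diamond}$, so the two sides meet after the $\b$-step. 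All the other clauses are congruences: in the application, abstraction, variable, and named-binder cases the substitution $\sigma$ and the map $\theta$ are pushed inward and one simply invokes the induction hypothesis under the evaluation-context, using that $\tr_{\b}^*$ is a congruence (closed under the term constructors). For a differently-named binder $M=\m\b.M'$ with $\b\neq\a$ one also needs that $x_{\a}$ does not clash with the fresh $x_{\b}$ and with the free variables of $T_B$, which holds by the variable-disjointness convention of the translation.

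The main obstacle I expect is bookkeeping rather than conceptual difficulty: I must be careful that the $\m$-substitution $\theta$ acts \emph{simultaneously} on all named occurrences $(\a\;L)$ throughout $M$, matching the simultaneous action of the single $\l$-substitution $\sigma$ on all free occurrences of $x_{\a}$, and that the captured argument $L$ in each occurrence is exactly the subterm whose translation appears to the left of $N^{\diamond}$ after the $\b$-step. Verifying this alignment forces the statement to be about $\tr_{\b}^*$ rather than literal equality, since each named occurrence contributes its own $\b$-redex and these redexes are contracted independently; the congruence closure of $\tr_{\b}^*$ is precisely what lets these local reductions be combined in the application and binder cases.
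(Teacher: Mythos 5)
Your proposal is correct and takes essentially the same route as the paper: structural induction on $M$, with (1) immediate, and with the only non-trivial case of (2) being $M=(\a\;M')$, where the substitution creates a $\b$-redex whose contraction yields $(x_{\a}\;(\{M'\}^{\diamond}\sigma\;N^{\diamond}))$, after which the induction hypothesis and the congruence closure of $\tr_{\b}^*$ finish the case. One small inaccuracy: point (1) is not actually used inside the induction for (2) --- every case of (2), including the binder cases, closes using only its own induction hypothesis --- but this extra claimed dependency is harmless.
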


\begin{proof}
By induction on $M$. The first point is immediate.  For the
second, the only interesting case is $M = (\a \; K)$.  Then,
$M^{\diamond}[x_{\a} := \l v. (x_{\a} \; (v \; N^{\diamond}))] =(
\l v. (x_{\a} \; (v \; N^{\diamond})) \ K^{\diamond}[x_{\a} := \l
v. (x_{\a} \; (v \; N^{\diamond}))]) $ $\tr_{\b} \ (x_{\a}
\;(K^{\diamond}[x_{\a} := \l v. (x_{\a} \; (v \; N^{\diamond}))]
\; N^{\diamond}) \tr_{\b}^*(x_{\a} \;(\{K[(\a \; L) := (\a \; (L
\; N))]\}^{\diamond} \; N^{\diamond})= \{M[(\a\; L) := (\a \; (L
\; N))]\}^{\diamond}$.
\end{proof}

\begin{lemma}\label{simulation1}
Let $M \in {\cal M}'$.
\begin{enumerate}
\item If $M \tr_{\b} N$, then $M^{\diamond} \tr_{\b}^+ N^{\diamond}$.
\item If $M \tr_{\m} N$, then $M^{\diamond} \tr_{\b}^+ N^{\diamond}$.
\item If $M \tr_{\b\m}^* N$, then $M^{\diamond} \tr_{\b}^* N^{\diamond}$ and
$lg(M^{\diamond} \tr_{\b}^* N^{\diamond})
\geq lg(M \tr_{\b\m}^* N)$.
\end{enumerate}
\end{lemma}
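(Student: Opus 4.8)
The plan is to prove the three simulation properties by induction on the structure of the term $M$, leaning heavily on the two substitution-commutation facts established in the previous lemma. The key observation is that the translation $(-)^{\diamond}$ is defined compositionally on term structure, so each reduction redex in the source $\l\m$-calculus should unfold into one or more $\b$-steps in its image.

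For point (1), I would argue that it suffices to handle the case where $M \tr_{\b} N$ is a head $\b$-reduction, i.e. $M = (\l x. P \; Q)$ and $N = P[x:=Q]$, since reductions inside a subterm follow immediately from the induction hypothesis together with the fact that $(-)^{\diamond}$ commutes with all the term constructors. In the head case, $M^{\diamond} = (\l x. P^{\diamond} \; Q^{\diamond}) \tr_{\b} P^{\diamond}[x := Q^{\diamond}]$, and by part (1) of the preceding lemma this last term equals $\{P[x:=Q]\}^{\diamond} = N^{\diamond}$. This gives exactly one $\b$-step, so $M^{\diamond} \tr_{\b}^{+} N^{\diamond}$ holds.

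Point (2) is where the real work lies, since the $\m$-rule reduction $(\m \a. P \; Q) \tr_{\m} \m \a. P[(\a\;L):=(\a\;(L\;Q))]$ is a substitution on named subterms rather than on an ordinary variable. Here the translation of $\m \a. P$ is $(T_A \; \l x_{\a}. P^{\diamond})$, and applying it to $Q^{\diamond}$ forces an interaction with the term $T_A$. I expect the main obstacle to be unwinding the $\b$-normal behaviour of $(T_{A} \; \l x_{\a}. P^{\diamond} \; Q^{\diamond})$ and matching it, via part (2) of the previous lemma, to $\{\m \a. P[(\a\;L):=(\a\;(L\;Q))]\}^{\diamond}$. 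Because the type of $\a$ is of the form $\neg A = A \f \bot$, I would use the explicit definition $T_{A\f\bot}$ (or rather the relevant clause of $T_A$) to compute how $T_A$ applied to a function and an argument reduces; the telescoping in the definition of $T_{A\f B}$ is precisely engineered so that the extra argument $Q^{\diamond}$ gets pushed inside as the substitution $x_{\a} := \l v.(x_{\a}\;(v\;Q^{\diamond}))$. Once that reduction chain is identified, part (2) of the earlier lemma converts the resulting term into $N^{\diamond}$, and since at least the initial contraction is a genuine $\b$-step we obtain $M^{\diamond} \tr_{\b}^{+} N^{\diamond}$. Reductions under a context again reduce to the head case by the induction hypothesis.

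Finally, point (3) follows from (1) and (2) by a straightforward induction on the length of the reduction sequence $M \tr_{\b\m}^{*} N$. Each single step, whether $\b$ or $\m$, produces at least one $\b$-step in the image by (1) and (2), so concatenating gives $M^{\diamond} \tr_{\b}^{*} N^{\diamond}$, and counting steps yields $lg(M^{\diamond} \tr_{\b}^{*} N^{\diamond}) \geq lg(M \tr_{\b\m}^{*} N)$ because every source step contributes one or more target steps. This inequality is the crucial ingredient for transferring strong normalization back from the $\l$-calculus to the $\l\m$-calculus: an infinite $\b\m$-reduction would map to an infinite $\b$-reduction, contradicting Theorem~\ref{sn_rd}.
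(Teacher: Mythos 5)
Your proposal is correct and follows essentially the same route as the paper: structural induction, with $\b$-redexes handled by one $\b$-step plus part (1) of the substitution lemma, $\m$-redexes handled by unwinding the arrow clause $T_{A\f B}$ so that the argument is pushed in as the substitution $x_{\a} := \l v.(x_{\a}\;(v\;N^{\diamond}))$ and then closed off by part (2) of that lemma, and point (3) obtained by concatenating and counting steps. One small slip in wording: the term unwound is $T_A$ where $\m\a.M$ has (arrow) type $A$, not $T_{A\f\bot}$ for the type $\neg A$ of $\a$ itself, but your parenthetical correction and the subsequent computation show you mean the right thing.
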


\begin{proof}
By induction on $M$. (1) is immediate. (2) is as follows.

\medskip

\noindent $(\mu \a^{\neg(A \f B)}.M \; N) \tr_{\m} \mu \a^{\neg
B}.M[(\a^{\neg(A \f B)} \; L) := (\a^{\neg B} \; (L \; N))]$ is
translated by \\ $\{(\mu \a.M \; N)\}^{\diamond} = (T_{A \f B} \;\; \l
x_{\a}.M^{\diamond} \; N^{\circ}) \ \tr_{\b}^+ (T_B \; \l
u.M^{\diamond}[x_{\a} := \l v. (u \; (v \; N^{\diamond}))] = (T_B \;
\l x_{\a}.M^{\diamond}[x_{\a} := \l v. (x_{\a} \; (v \;
N^{\diamond}))] \ \tr_{\beta}^* (T_B \; \l x_{\a}.\{M[(\a \; L) := (\a
\; (L \; N))]\}^{\diamond}) = \{\m \a.M[(\a \; L) := (\a \; (L \;
N))]\}^{\diamond}$.

  \medskip

\noindent  (3) follows immediately from (1) and (2).
\end{proof}

\begin{lemma}\label{sn1->sn2}
Let $M \in {\cal M}'$. If $M^{\diamond} \in SN_{\b}$, then $M \in
SN_{\b\m}$.
\end{lemma}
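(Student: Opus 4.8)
The plan is to argue by contraposition, using the simulation result of Lemma \ref{simulation1} to transport an infinite source reduction into an infinite target reduction. Concretely, I would assume $M \notin SN_{\b\m}$ and derive $M^{\diamond} \notin SN_{\b}$, which contradicts the hypothesis. Since the translation is defined only on typed terms, I take $M$ to be a typed $\l\m$-term throughout, as in the statement.

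Suppose $M \notin SN_{\b\m}$. Then there is an infinite reduction sequence $M = M_0 \tr_{\b\m} M_1 \tr_{\b\m} M_2 \tr_{\b\m} \cdots$ in the $\l\m$-calculus. Each single step $M_i \tr_{\b\m} M_{i+1}$ is either a $\b$-step or a $\m$-step: in the first case Lemma \ref{simulation1}(1) gives $M_i^{\diamond} \tr_{\b}^+ M_{i+1}^{\diamond}$, and in the second case Lemma \ref{simulation1}(2) gives the same conclusion. The crucial feature is that the simulation is \emph{strict}, that is, each source step is mapped to a \emph{nonempty} sequence of $\b$-reductions, since the target relation obtained is $\tr_{\b}^+$ rather than $\tr_{\b}^*$.

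Concatenating these segments yields $M^{\diamond} = M_0^{\diamond} \tr_{\b}^+ M_1^{\diamond} \tr_{\b}^+ M_2^{\diamond} \tr_{\b}^+ \cdots$. Because each of the infinitely many segments contributes at least one genuine $\b$-reduction step, this is an infinite $\b$-reduction starting from $M^{\diamond}$, whence $M^{\diamond} \notin SN_{\b}$. This contradicts the hypothesis $M^{\diamond} \in SN_{\b}$, and therefore $M \in SN_{\b\m}$.

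I do not anticipate a real obstacle here, as essentially all the work is done by Lemma \ref{simulation1}; the only point that must be checked carefully is the strictness of the simulation, namely that every $\b$- or $\m$-step of $M$ corresponds to at least one $\b$-step of $M^{\diamond}$. Were some step allowed to collapse to zero target steps, the concatenated sequence might be finite and the argument would break; the $\tr_{\b}^+$ in parts (1) and (2) is exactly what rules this out. One could equivalently invoke the length inequality $lg(M^{\diamond} \tr_{\b}^* N^{\diamond}) \geq lg(M \tr_{\b\m}^* N)$ of part (3), applied to arbitrarily long finite prefixes of the infinite reduction, to bound $\eta_{\b}(M^{\diamond})$ below by every prefix length and so contradict $M^{\diamond} \in SN_{\b}$.
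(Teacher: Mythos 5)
Your proof is correct and uses essentially the same approach as the paper: both rest on the strictness of the simulation in Lemma \ref{simulation1}, i.e.\ that each $\b\m$-step of $M$ yields at least one $\b$-step of $M^{\diamond}$. The paper merely phrases it finitely, taking a prefix of the hypothetical infinite reduction of length $\eta_{\b}(M^{\diamond})+1$ and applying part (3) to contradict the definition of $\eta_{\b}(M^{\diamond})$ --- exactly the variant you mention in your closing remark.
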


\begin{proof}
Let $n=\eta_{\b}(M^{\diamond})+1$. If $M \not \in SN_{\b\m}$,
there is $N$ such that $M \tr_{\b\m}^* N$ and $lg(M \tr_{\b\m}^*
N) \geq n$. Thus, by lemma \ref{simulation1}, $M^{\diamond}
\tr_{\b}^* N^{\diamond}$ and $lg(M^{\diamond} \tr_{\b}^*
N^{\diamond}) \geq lg(M \tr_{\b\m}^* N) \geq
\eta_{\b}(M^{\diamond})+1$. This contradicts the definition of
$\eta_{\b}(M^{\diamond})$.
\end{proof}

\begin{lemma}\label{coding1}
If $\G \v_{{\cal S}^{\m}} M : A$, then $\G^{\diamond} \v_{{\cal S}^c}
M^{\diamond} : A$ where $\G^{\diamond}$ is obtained from $\G$ by
replacing $\a : \neg B$ by ${x}_{\a} : \neg B$.
\end{lemma}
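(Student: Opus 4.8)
The plan is to prove Lemma \ref{coding1} by a straightforward induction on the typing derivation of $\G \v_{{\cal S}^{\m}} M : A$, checking that each typing rule of ${\cal S}^{\m}$ is preserved under the translation $(\cdot)^{\diamond}$. The induction should be structured so that the statement carried along is exactly the claimed implication: whenever $\G \v_{{\cal S}^{\m}} M : A$, we have $\G^{\diamond} \v_{{\cal S}^c} M^{\diamond} : A$.

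First I would handle the purely $\l$-calculus rules ($ax$, $\f_i$, $\f_e$), which are immediate since the translation is the identity on variables, commutes with $\l$-abstraction and with application, and $\G^{\diamond}$ retains every declaration $x:A$ unchanged. The genuinely new cases are the two $\l\m$ rules. For the $\bot_e$ rule $\F{\G , \a : \neg A \v M : \bot}{\G \v \mu \a.M : A}$, the translation gives $\{\m\a.M\}^{\diamond} = (T_A \;\; \l x_{\a}.M^{\diamond})$. Here the induction hypothesis yields $(\G,\a:\neg A)^{\diamond} = \G^{\diamond}, x_{\a}:\neg A \v_{{\cal S}^c} M^{\diamond} : \bot$, so by $\f_i$ we get $\G^{\diamond} \v_{{\cal S}^c} \l x_{\a}.M^{\diamond} : \neg A \f \bot = \neg\neg A$; combining this with Lemma \ref{tran}, which gives $\v_{{\cal S}^c} T_A : \neg\neg A \f A$, an application of $\f_e$ produces $\G^{\diamond} \v_{{\cal S}^c} (T_A \;\; \l x_{\a}.M^{\diamond}) : A$, as required. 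For the $\bot_i$ rule $\F{\G , \a : \neg A \v M : A}{\G , \a : \neg A \v (\a \; M) : \bot}$, the translation gives $\{(\a\;M)\}^{\diamond} = (x_{\a} \; M^{\diamond})$; the induction hypothesis supplies $\G^{\diamond}, x_{\a}:\neg A \v_{{\cal S}^c} M^{\diamond} : A$, and since $x_{\a}:\neg A$ is present in the translated context, a single $\f_e$ step typing $x_{\a}:A\f\bot$ applied to $M^{\diamond}:A$ yields $(x_{\a}\;M^{\diamond}):\bot$.

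There is no real obstacle here: the whole content of the lemma is that the two defining clauses of the translation for $\m$-terms match the logical behaviour of the double-negation constants $c_X$ packaged inside $T_A$. The only point requiring slight care is bookkeeping about the context $\G^{\diamond}$ and the auxiliary constant declarations for the $c_X$: one must observe that the constants $c_{X_1},\dots,c_{X_n}$ introduced by the various $T_A$ occurring in $M^{\diamond}$ are exactly those tracked by the notation $\v_{{\cal S}^c}$, so that Lemma \ref{tran} can be invoked uniformly. Since these constants are global and do not interact with the variable/$\m$-variable declarations of $\G$, their accumulation across the induction causes no conflict. The correspondence $\a:\neg B \mapsto x_{\a}:\neg B$ is a bijection on declarations preserving types, so contexts transform coherently through every rule, and the induction closes.
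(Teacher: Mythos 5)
Your proposal is correct and is exactly the paper's proof: the paper disposes of this lemma with ``By induction on the typing $\G \v_{{\cal S}^{\m}} M : A$. Use lemma \ref{tran}.'', and your case analysis (trivial $\l$-cases, $\bot_i$ via the declaration $x_{\a}:\neg A$, $\bot_e$ via $\l x_{\a}.M^{\diamond} : \neg\neg A$ combined with $T_A : \neg\neg A \f A$ from Lemma \ref{tran}) is precisely the intended content of that one-line proof. The bookkeeping remark about accumulating the constants $c_X$ under the $\v_{{\cal S}^c}$ notation is a sensible detail the paper leaves implicit.
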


\begin{proof}
By induction on the typing $\G \v_{{\cal S}^{\m}} M : A$. Use lemma \ref{tran}.
\end{proof}

\begin{theorem}\label{SN_M}
The simply typed $\l\m$-calculus is strongly normalizing for
$\tr_{\b\m}$.
\end{theorem}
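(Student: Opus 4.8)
The plan is to derive Theorem \ref{SN_M} as an immediate consequence of the machinery built up in this section, reducing strong normalization of the $\l\m$-calculus to that of the simply typed $\l$-calculus (Theorem \ref{sn_rd}). The strategy is purely by translation: given a typed $\l\m$-term $M$, we pass to its image $M^\diamond$ under the $\diamond$-translation, use the fact that this image lives in a strongly normalizing system, and pull back the absence of infinite reductions along the simulation lemma.

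Concretely, first I would take an arbitrary typed $\l\m$-term $M$, say $\G \v_{{\cal S}^\m} M : A$. By Lemma \ref{coding1}, the translated term $M^\diamond$ is typable in the extended simply typed $\l$-calculus ${\cal S}^c$, namely $\G^\diamond \v_{{\cal S}^c} M^\diamond : A$. The key point here is that ${\cal S}^c$ is just ${\cal S}$ augmented by the constants $c_X : \neg\neg X \f X$; since these constants are simply typed free variables (of a perfectly ordinary type in ${\cal T}$), a term typable in ${\cal S}^c$ is literally a typed $\l$-term, and Theorem \ref{sn_rd} applies verbatim. Hence $M^\diamond \in SN_\b$.

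Next I would invoke Lemma \ref{sn1->sn2}, which states exactly that $M^\diamond \in SN_\b$ implies $M \in SN_{\b\m}$. This gives the conclusion directly. The engine behind Lemma \ref{sn1->sn2} is the simulation property of Lemma \ref{simulation1}: every $\b$- or $\m$-step of $M$ is mapped to at least one $\b$-step of $M^\diamond$ (indeed $M \tr_{\b\m} N$ forces $M^\diamond \tr_\b^+ N^\diamond$), so an infinite reduction of $M$ would produce an infinite, or at least arbitrarily long, reduction of $M^\diamond$, contradicting $M^\diamond \in SN_\b$. Since all the real work — establishing that the translation preserves typing, that it strictly simulates reductions, and that the target calculus is normalizing — has already been carried out in Lemmas \ref{coding1}, \ref{simulation1} and \ref{sn1->sn2} together with Theorem \ref{sn_rd}, the proof of the theorem itself is a two-line composition of these results.

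The only point requiring a moment's care, and thus the closest thing to an obstacle, is the observation that typability in ${\cal S}^c$ genuinely licenses the use of Theorem \ref{sn_rd}: one must note that the constants $c_X$ carry honest ${\cal T}$-types and behave as free variables of the simply typed $\l$-calculus, so no new reduction behaviour is introduced and the strong normalization theorem for ${\cal S}$ covers $M^\diamond$. Once this is granted, there is nothing subtle left; the difficulty of the whole result has been front-loaded into the preceding lemmas, and the theorem is their corollary.
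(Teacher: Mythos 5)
Your proof is correct and is exactly the paper's argument: Lemma \ref{coding1} gives typability of $M^{\diamond}$ in ${\cal S}^c$, Theorem \ref{sn_rd} yields $M^{\diamond} \in SN_{\b}$ (the constants $c_X$ being ordinary typed free variables, as you rightly note), and Lemma \ref{sn1->sn2} pulls strong normalization back to $M$. The paper's proof is precisely this composition of the three results.
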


\begin{proof}
A consequence of lemmas  \ref{sn1->sn2}, \ref{coding1} and theorem
\ref{sn_rd}.
\end{proof}

\section{Some classical results on the
$\l\m$-calculus}\label{rab}

The translation given in the next section needs the addition, to
the $\l\m$-calculus, of the following reductions rules.

\begin{center}
$(\b \; \mu \a. M) \tr_{\r} M[\a := \b]$

 $\mu \a. (\a \; M) \tr_{\th} M$ if $\a \not \in Fv(M)$
\end{center}

We will need some classical results about these new rules. For the
paper to remain self-contained, we also have added their proofs.
The reader who already knows these results or is only interested
by the results of the next section may skip this part.

\subsection{Adding $\tr_{\r\th}$ does not change $SN$}

\begin{theorem}\label{retardemant}
Let $M \in {\cal M}'$ be such that $M \in SN_{\b\m}$. Then $M \in
SN_{\b\m\r\th}$.
\end{theorem}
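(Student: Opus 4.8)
The plan is to combine two facts: that $\tr_{\r\th}$ is terminating on its own, and that $\r$- and $\th$-steps can always be postponed past $\b\m$-steps. First I would record that $\tr_{\r\th}$ is strongly normalizing. Writing $n_{\m}(M)$ for the number of occurrences of $\m$ in $M$, both $(\b\;\m\a.M)\tr_{\r}M[\a:=\b]$ and $\m\a.(\a\;M)\tr_{\th}M$ erase exactly one $\m$, so $n_{\m}$ strictly decreases at every $\r$- or $\th$-step; hence $SN_{\r\th}$ is immediate and, more quantitatively, $n_{\m}$ is a termination measure for $\tr_{\r\th}$.

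The heart of the argument is a postponement (``retardement'') lemma: if $M\tr_{\r\th}N$ and $N\tr_{\b\m}P$, then there is $Q$ with $M\tr_{\b\m}^{+}Q$ and $Q\tr_{\r\th}^{*}P$. This is proved by a case analysis on the relative positions of the contracted $\r\th$- and $\b\m$-redexes. When they are disjoint, or nested without interaction, the two steps simply commute: one performs the $\b\m$-step first, the $\r\th$-redex being possibly duplicated or erased (whence the $\tr_{\r\th}^{*}$). The only genuinely delicate cases are those in which the $\r\th$-step \emph{creates} the $\b\m$-redex; the typical one is a $\th$-step turning a $\m$-redex into a $\b$-redex, as in $(\m\a.(\a\;K)\;N)\tr_{\th}(K\;N)$ with $K=\l x.K'$. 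There one first fires the $\m$-redex in $M$, obtaining $\m\a.(\a\;(K\;N))$, then contracts the created $\b$-redex inside it to reach $\m\a.(\a\;K'[x:=N])$, and only afterwards applies the $\th$-step; this yields the required factorization $M\tr_{\b\m}^{+}\cdot\tr_{\th}P$. This is exactly the long but routine case analysis I would defer to the appendix.

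From postponement I would deduce, for every $M\in SN_{\b\m}$, the statement $(\star)$: if $M\tr_{\r\th}N$ then $N\in SN_{\b\m}$ and $\eta_{\b\m}(N)\le\eta_{\b\m}(M)$. The proof is by induction on $\eta_{\b\m}(M)$. If $N$ is $\b\m$-normal there is nothing to prove; otherwise take any $N\tr_{\b\m}N'$ and apply postponement to $M\tr_{\r\th}N\tr_{\b\m}N'$, producing $M\tr_{\b\m}^{+}Q\tr_{\r\th}^{*}N'$. Since $M\tr_{\b\m}^{+}Q$ uses at least one step, $\eta_{\b\m}(Q)<\eta_{\b\m}(M)$, so the induction hypothesis applies along the chain $Q\tr_{\r\th}^{*}N'$ (each intermediate term having $\eta_{\b\m}$ strictly below $\eta_{\b\m}(M)$) and gives $N'\in SN_{\b\m}$ with $\eta_{\b\m}(N')\le\eta_{\b\m}(Q)\le\eta_{\b\m}(M)-1$. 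As this holds for every one-step $\b\m$-reduct $N'$ of $N$, I conclude $N\in SN_{\b\m}$ and $\eta_{\b\m}(N)\le\eta_{\b\m}(M)$.

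Finally I would close by a well-founded measure. Every term reachable from $M$ by $\tr_{\b\m\r\th}$ stays in $SN_{\b\m}$ (trivially for $\b\m$-steps, by $(\star)$ for $\r\th$-steps), so the pair $(\eta_{\b\m}(\cdot),n_{\m}(\cdot))$ is defined all along the reduction. A $\b\m$-step strictly decreases $\eta_{\b\m}$, whereas a $\r\th$-step leaves $\eta_{\b\m}$ non-increasing by $(\star)$ and strictly decreases $n_{\m}$. Hence this pair strictly decreases in the lexicographic order at each $\tr_{\b\m\r\th}$-step, which forbids any infinite reduction and gives $M\in SN_{\b\m\r\th}$. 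The main obstacle is the postponement lemma of the second paragraph: the real work is to verify that no $\r$- or $\th$-step can create a $\b\m$-redex that could not already be produced by reducing $M$ with $\b\m$ first, together with the elementary but lengthy residual bookkeeping in the commuting cases.
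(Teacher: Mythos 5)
Your proposal is correct and rests on the same two pillars as the paper's proof: termination of $\tr_{\r\th}$ alone (lemma \ref{SNrth}; your $n_{\m}$ measure works just as well as the paper's size argument) and postponement of $\r\th$-steps past $\b\m$-steps. What differs is how the pillars are combined, and the difference is substantive. The paper establishes a \emph{global} factorization theorem (theorem \ref{ch1:pprt}): every reduction $M\tr^*_{\b\m\r\th}N$ containing a $\b\m$-step factors as $M\tr^+_{\b\m}P\tr^*_{\r\th}N$. Getting this from local commutations is exactly the delicate part: the paper postpones $\th$ and $\r$ in two separate stages (theorems \ref{ch1:ppt} and \ref{ch1:ppr}) and needs the auxiliary relation $\tr_{\m_0}$ of lemma \ref{ch1:bth} to handle the redex-creating case $(\m\a.(\a\;K)\;N)$ --- the same case you isolate --- as well as lemmas \ref{ch1:pth} and \ref{ch1:brh} for the sequence-level bookkeeping. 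You avoid global postponement altogether: from the one-step postponement $\tr_{\r\th}\cdot\tr_{\b\m}\subseteq\tr^+_{\b\m}\cdot\tr^*_{\r\th}$ you derive your $(\star)$ by induction on $\eta_{\b\m}$, and conclude with the lexicographic measure $(\eta_{\b\m},n_{\m})$. This is a genuine simplification: a local postponement with $\tr^+$ on the left and $\tr^*$ on the right does not iterate naively into a global one (the residual $\r\th$-steps may proliferate), and your $\eta_{\b\m}$-induction sidesteps precisely the iteration that the paper's intermediate lemmas are there to control; it is also correctly organized, since in $(\star)$ the induction hypothesis is legitimately applied along the whole chain $Q\tr^*_{\r\th}N'$ because every term in it has $\eta_{\b\m}$ strictly below $\eta_{\b\m}(M)$.

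One caveat, which you share with the paper rather than introduce: the assertion that every $\b\m$-redex created by a $\r$-step can be recovered by reducing $M$ first fails for raw terms of de Groote's syntax, in which a named subterm may occur in function position. With $\D=\l x.(x\;x)$, the term $((\b\;\m\a.\D)\;\D)$ is $\b\m$-normal, yet a single $\r$-step sends it to $(\D\;\D)$; so the postponement lemma (yours, and lemma \ref{ch1:brh}(1) of the paper) --- and indeed the theorem as literally stated --- require that named subterms $(\b\;K)$ never be applied. This is automatic for typable terms, since $(\b\;K)$ has type $\bot$, and it holds for the translated terms $M^{\circ}$ to which the theorem is applied, but it deserves to be said explicitly.
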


\begin{proof}
This follows from the fact that $\tr_{\r\th}$ can be  postponed
(theorem \ref{ch1:pprt} below) and
 that $\tr_{\r\th}$ is strongly normalizing (lemma
\ref{SNrth} below).
\end{proof}

\begin{lemma}\label{SNrth}
The reduction $\tr_{\r\th}$ is strongly normalizing.
\end{lemma}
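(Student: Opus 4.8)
The plan is to prove the lemma by a trivial size argument: I would show that each single $\tr_{\r}$-step and each single $\tr_{\th}$-step strictly decreases $size(M)$, the number of symbols of $M$. Since $size$ takes values in $\mathbb{N}$, which is well-founded under $<$, no infinite $\tr_{\r\th}$-reduction can exist, and thus $M \in SN_{\r\th}$ for every $M$.

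First I would examine the effect of each rule at the position of the redex itself. For the $\r$-rule, $(\b \; \m \a. M) \tr_{\r} M[\a := \b]$, the key observation is that $M[\a := \b]$ denotes a mere \emph{renaming} of the free occurrences of the $\m$-variable $\a$ by $\b$, and not the structural (size-increasing) substitution used in the $\m$-rule and the $\ep$-rule; hence $size(M[\a := \b]) = size(M)$. Consequently the redex $(\b \; \m\a.M)$ has size $size(M) + 2$ (one application node and one $\m$-binder), while its contractum has size $size(M)$, a strict decrease. Similarly, for the $\th$-rule, $\m \a. (\a \; M) \tr_{\th} M$ with $\a \notin Fv(M)$, the redex has size $size(M) + 2$ and the contractum size $size(M)$, again a strict decrease.

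Next I would lift this observation from head redexes to arbitrary redex positions. Since $size$ is defined compositionally, the size of any term being a sum of the sizes of its immediate subterms plus the number of its top constructors, a strict decrease of size inside a subterm propagates to a strict decrease of the size of the whole term. Therefore every $\tr_{\r\th}$-step strictly decreases $size$, and strong normalization follows immediately. There is no genuine obstacle here: the only points requiring (routine) care are the verification that the $\r$-renaming leaves $size$ unchanged, in contrast with the structural substitutions of the $\m$- and $\ep$-rules, and the contextual-closure step; once these are noted, the argument is a one-line induction on a natural-number measure.
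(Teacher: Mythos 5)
Your proof is correct and is exactly the paper's argument: the paper disposes of this lemma with the single sentence that $\tr_{\r\th}$ decreases the size, which is precisely the measure you use. Your write-up merely makes explicit the two routine verifications (that the $\r$-renaming preserves size, and that the decrease propagates through contexts) that the paper leaves implicit.
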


\begin{proof}
The reduction  $\tr_{\r\th}$ decreases the size.
\end{proof}

\begin{theorem}\label{ch1:pprt} Let $M,N$ be such that $M\tr^*_{\b \m
\rho \theta}N$ and $lg_{\b\m}(M\tr^*_{\b \m \rho \theta} N) \geq
1$. Then $M\tr_{\b \m }^+P\tr^*_{\rho \theta}N$ for some $P$.
\end{theorem}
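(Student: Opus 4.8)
The plan is to postpone the $\tr_{\r\th}$-steps one block at a time, reducing everything to a single local commutation lemma together with a clean outer induction on the number of $\b\m$-steps.

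First I would set up the outer induction on $m = lg_{\b\m}(M\tr^*_{\b\m\r\th}N)$. Since $m\geq 1$, the reduction factors as $M\tr_{\r\th}^* T\tr_{\b\m}T'\tr^*N$, where $T\tr_{\b\m}T'$ is the \emph{first} $\b\m$-step; thus $M\tr_{\r\th}^*T$ is a pure $\r\th$-prefix and the tail $T'\tr^*N$ contains exactly $m-1$ $\b\m$-steps. The crux is the following \emph{Prefix Lemma}: if $M\tr_{\r\th}^*T\tr_{\b\m}T'$ then $M\tr_{\b\m}^+P\tr_{\r\th}^*T'$ for some $P$. Granting it, the first $\b\m$-step is pulled to the front while the tail $T'\tr^*N$ is left untouched, giving $M\tr_{\b\m}^+P\tr_{\r\th}^*T'\tr^*N$; here the reduction $P\tr_{\r\th}^*T'\tr^*N$ still has only $m-1$ $\b\m$-steps. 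If $m-1=0$ we are done; otherwise the induction hypothesis yields $P\tr_{\b\m}^+P'\tr_{\r\th}^*N$, and concatenation gives $M\tr_{\b\m}^+P'\tr_{\r\th}^*N$. This reduces the theorem entirely to the Prefix Lemma.

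The Prefix Lemma in turn rests on a \emph{local commutation lemma}: whenever $M\tr_{\r\th}M_1\tr_{\b\m}N$, there is $P$ with $M\tr_{\b\m}^+P\tr_{\r\th}^*N$. This is exactly the statement established by the long case analysis of the appendix, obtained by comparing the positions of the $\r\th$-redex and the $\b\m$-redex: when they are disjoint, or one is nested inside the other, the swap is immediate and produces one $\b\m$-step and one $\r\th$-step. The genuine critical pairs are two. The first is the overlap of $\th$ with $\m$ in $(\m\a.(\a\;M)\;N)$, where reducing by $\th$ first and then by $\b\m$ must be matched by a $\m$-step followed by a further $\b\m$-step and a closing $\th$-step, so the right-hand side carries two $\b\m$-steps. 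The second is a duplicating $\b$-step acting on a term containing a $\r$-redex, where the $\b$-step copies that redex, so the right-hand side then carries several $\r\th$-steps. Assembling the Prefix Lemma from the local lemma proceeds by induction on the length of the $\r\th$-prefix, peeling off its last step and then pushing the remaining single leading $\r\th$-step past the $\b\m$-block produced by the induction hypothesis.

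The hard part is precisely this last push, that is, the termination of iterating the local lemma. As the two critical pairs show, a single swap can simultaneously increase the number of $\b\m$-steps (the $\th$/$\m$ overlap) and the number of $\r\th$-steps (the duplicating $\b$-step), so neither the total length, nor the number of steps of either kind, nor the count of $\r\th$-before-$\b\m$ inversions is monotone, and a naive rearrangement need not terminate. I expect to resolve this with a well-founded measure that exploits Lemma \ref{SNrth}: every $\tr_{\r\th}$-step strictly decreases $size$, the extra $\r\th$-steps created by duplication are residuals of a single $\r\th$-redex already present before the $\b\m$-step, and the extra $\b\m$-steps created by the $\th$/$\m$ overlap are bounded by the number of available $\th$-redexes, which is itself controlled by $size$. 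A lexicographic combination of these quantities should make each application of the local lemma decrease the measure, and this is the step I would expect to demand the most care.
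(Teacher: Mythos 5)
Your skeleton (outer induction on $lg_{\b\m}$, a prefix lemma, and a one-step commutation lemma) is the standard one and your diagnosis of where the difficulty lies is accurate; but the proof is genuinely incomplete at exactly the point you flag, and the measure you sketch to close it does not work. Two concrete problems. First, your inventory of critical pairs is missing one: besides the $\th$/$\m$ overlap, the $\r$-rule overlaps with the $\m$-rule. In $(\m \a. C[(\a \; \m \g. K)] \; P)$, doing the inner $\r$-step and then the outer $\m$-step can only be matched by the outer $\m$-step, then a \emph{second} $\m$-step on the newly created redex $(\m \g. K' \; P)$, and only then $\r$; this is precisely the second disjunct of lemma \ref{ch1:brh}(2) in the paper. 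So extra $\b\m$-steps are not produced solely by the $\th$/$\m$ overlap, and in particular they are not bounded by a count of $\th$-redexes. Second, ``controlled by $size$'' is not a usable bound along the rearrangement: a $\b\m$-step can increase the size, the number of $\th$-redexes and the number of $\r$-redexes arbitrarily (duplication), so after the first swap these quantities are no longer related to the size of any fixed term. Since a single swap can simultaneously lengthen the $\b\m$-block that the remaining $\r\th$-steps must still cross and multiply those $\r\th$-steps, none of the components of your lexicographic measure is guaranteed to decrease: you have identified the circularity, not broken it. (A small misattribution as well: the appendix proves the simulation lemma \ref{simulation2} for the translation; the commutation case analyses are lemmas \ref{ch1:bth} and \ref{ch1:brh}, proved by induction on $M$.)

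The paper escapes the circle with two ideas absent from your proposal. It postpones the two rules in separate phases: $\th$ w.r.t.\ $\tr_{\b\m\r}$ first (theorem \ref{ch1:ppt}), then $\r$ w.r.t.\ $\tr_{\b\m}$ (theorem \ref{ch1:ppr}). In the $\th$-phase, the extra step created by the $\th$/$\m$ (or $\th$/$\b$) overlap is not an arbitrary $\m$-step but a $\mu_0$-step, i.e.\ a $\m$-step whose $\m$-variable occurs at most once, hence non-duplicating; the key invariant is lemma \ref{ch1:pth}: a $\mu_0$-step can be pushed backwards through a $\th$-prefix while the \emph{number of $\th$-steps stays exactly the same}. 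That number --- not the size --- is the well-founded measure, and theorem \ref{ch1:ppt} follows by induction on the length of the $\th$-prefix. In the $\r$-phase, the swaps of lemma \ref{ch1:brh} are stated so that exactly one $\b\m$-step appears on the left and all newly created $\r$-steps land after it, where they never have to be moved again, so induction on the length of the $\r$-prefix suffices. To salvage your single-phase plan you would have to build both mechanisms into your measure; splitting the postponement in two and isolating a non-duplicating subrelation is what makes the termination argument elementary.
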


This is proved in two steps. First we show that the
$\tr_{\th}$-reduction can be postponed w.r.t. to $\tr_{\b\m\rho}$
(theorem \ref{ch1:ppt}). Then we show that the $\tr_{\rho}$-rule
can be postponed w.r.t. the remaining rules (theorem
\ref{ch1:ppr}).

\begin{definition}
 Say that  $P \tr_{\mu_0}P' $  if $P= (\mu \a M\;N),
  P'=\mu \a M[(\a \ L]:=(\a \ (L \ N))]$ and $\a$ occurs at most once in $M$

\end{definition}

\begin{lemma}
\label{ch1:bth}

\begin{enumerate}
  \item Assume $M\tr_{\th}P\tr_{\b\m}N$. Then either $M\tr_{\b\m}Q\tr^*_{\th}N$ for some $Q$  or
$M\tr_{\mu_0}R\tr_{\b\m}Q\tr_{\th}N$ for some $R$, $Q$.

  \item Let $M\tr_{\th}P\tr_{\mu_0}N$. Then either
$M\tr_{\mu_0}Q\tr_{\th}N$ for some $Q$ or
$M\tr_{\mu_0}R\tr_{\mu_0}Q\tr_{\th}N$ for some $R$, $Q$.

\item Let $M\tr_{\th}P\tr_{\rho}N$. Then
$M\tr_{\rho}Q\tr_{\th}N$.
\end{enumerate}
\end{lemma}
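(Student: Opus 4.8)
The plan is to prove each of the three postponement facts by a local case analysis on the relative positions of the two redexes, since these are all ``one-step commutation'' lemmas of the standard kind. For each item I would fix the single $\tr_\th$-redex contracted in the first step, namely an occurrence of $\m\a.(\a\;M')$ with $\a\notin Fv(M')$ that is replaced by $M'$, and then classify the second redex according to whether it lies entirely inside $M'$, entirely disjoint from the contracted subterm, or overlaps the $\m\a.(\a\;M')$ pattern in a way that the $\th$-step created. The first two situations are the easy ``parallel'' cases: the two reductions act on disjoint or nested-but-independent positions, so they simply commute and we land in the first alternative $M\tr_{\b\m}Q\tr^*_\th N$ (or $M\tr_{\mu_0}Q\tr_\th N$, or $M\tr_\r Q\tr_\th N$) with no extra steps. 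The real content is the overlap case, where contracting the $\th$-redex is what exposes the subsequent $\b$, $\m$, $\mu_0$ or $\r$ redex.

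For item (1), the delicate overlap is when the $\th$-step $\m\a.(\a\;M')\tr_\th M'$ brings $M'$ into a position where it becomes the head of an application, i.e. the surrounding context is $(\m\a.(\a\;M')\;N)$, which after $\th$ is $(M'\;N)$ and this $(M'\;N)$ is the $\b\m$-redex contracted in $P\tr_{\b\m}N$. Here one cannot in general commute directly: instead I would first perform the $\mu$-step on the original term, $(\m\a.(\a\;M')\;N)\tr_\m \m\a.(M'[(\a\;L):=(\a\;(L\;N))]\;?)$, observing that because $\a$ occurs at most once in $(\a\;M')$ this is exactly a $\tr_{\mu_0}$-step, and then recover the target by a $\b\m$-step followed by a residual $\th$-step; this is precisely why the statement offers the second alternative $M\tr_{\mu_0}R\tr_{\b\m}Q\tr_\th N$ and why the auxiliary relation $\tr_{\mu_0}$ was defined just beforehand. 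Item (2) is the purely structural interaction of $\th$ with $\mu_0$, where the same head-exposure phenomenon can force one to use two $\mu_0$-steps rather than one; the analysis is identical in spirit but lives entirely within the $\m$-fragment, so no $\b$-redexes intervene. Item (3) is the cleanest: since the $\r$-rule $(\b\;\m\a.M)\tr_\r M[\a:=\b]$ only renames a $\m$-variable and does not duplicate or erase redex patterns, a $\th$-step and a $\r$-step on the same term always commute outright, giving the single clean alternative $M\tr_\r Q\tr_\th N$.

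Concretely I would carry out the steps in the order (3), then (2), then (1), from simplest to most entangled, and in each case organize the argument by induction on the structure of $M$ so that the contextual (disjoint/nested) subcases are discharged immediately by the induction hypothesis and only the genuine root-level overlaps require explicit computation. Throughout I would lean on the fact that $\a$ occurs exactly once in a $\th$-redex $\m\a.(\a\;M')$, which is what lets a $\m$-step collapse to a $\mu_0$-step and keeps the number of created redexes bounded, so that the right-hand alternatives in the lemma each terminate after a fixed, small number of steps rather than spawning an uncontrolled cascade.

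The main obstacle I anticipate is the bookkeeping in the overlap case of item (1): after the $\th$-step exposes $(M'\;N)$ and we instead route through $\tr_{\mu_0}$, one must check that the substitution $[(\a\;L):=(\a\;(L\;N))]$ applied to $(\a\;M')$ — with $\a$ occurring once — really reassembles, after the subsequent $\b\m$- and $\th$-steps, into the same term $N$ reached by the original route, including getting all residual $\th$-redexes accounted for. This requires a careful tracking of which occurrences of the bound $\m$-variable survive and in exactly what argument positions the trailing $N$ is inserted; it is routine but error-prone, which is presumably why the authors relegate the full case analysis for such commutations to the appendix. Everything else reduces to the observation that disjoint redexes commute and that $\tr_\r$ and $\tr_\th$ neither create nor destroy each other's patterns.
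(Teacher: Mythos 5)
Your handling of items (1) and (2) is essentially the intended argument (the paper's own proof is only the phrase ``by induction on $M$''): the real content is the head-exposure overlap $(\m\a.(\a\;M')\;N)$, and your routing of it through $\tr_{\mu_0}$ --- first $(\m\a.(\a\;M')\;N)\tr_{\mu_0}\m\a.(\a\;(M'\;N))$ (a $\mu_0$-step because $\a$ occurs exactly once in $(\a\;M')$), then the exposed $\b\m$- or $\mu_0$-redex, then one final $\th$-step --- is precisely why the relation $\tr_{\mu_0}$ and the second disjuncts of (1) and (2) exist.

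The genuine gap is in item (3). Your claim that $\tr_\r$ and $\tr_\th$ ``neither create nor destroy each other's patterns'', so that the two steps ``always commute outright'', is false: a $\th$-step creates a $\r$-redex whenever the $\th$-redex is the \emph{argument of a named application} and its body is a $\m$-abstraction. Concretely, take $M=(\b\;\m\a.(\a\;\m\g.x))$. Then $M\tr_\th(\b\;\m\g.x)=P$ and $P\tr_\r x=N$, where the contracted $\r$-redex $(\b\;\m\g.x)$ has no ancestor in $M$. The only one-step $\r$-reducts of $M$ are $(\b\;\m\g.x)$ (contracting the outer redex) and $(\b\;\m\a.x)$ (contracting the inner redex $(\a\;\m\g.x)$), and neither contains any $\th$-redex at all, so no $Q$ with $M\tr_\r Q\tr_\th N$ exists. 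Your case analysis would therefore get stuck exactly at this creation case --- and the example shows that item (3), read literally, is itself too strong. What actually resolves this critical pair is absorption, not commutation: since $\a$ occurs only as the name in $(\a\;\m\g.x)$, contracting the \emph{outer} $\r$-redex of $M$ produces the same term $P$ as the $\th$-step did, so $M\tr_\r P\tr_\r N$ and the $\th$-step has been converted into a $\r$-step. A complete proof must weaken the conclusion of (3) to a disjunction such as ``$M\tr_\r Q\tr_\th^1 N$ or $M\tr_\r Q\tr_\r N$''; this still suffices for Theorem \ref{ch1:ppt}, since the number of $\b\m\r$-steps produced stays positive and no new $\th$-steps appear. (A smaller slip of the same kind: in item (2) a $\mu_0$-step may \emph{erase} its argument, since $\a$ may occur zero times, so a $\th$-redex sitting inside the argument leaves no residual and the final arrow must likewise be read as $\tr_\th^1$; your ``disjoint positions commute with no extra steps'' glosses over this erasure too.)
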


\begin{proof}
By induction on $M$.
\end{proof}

\begin{lemma} \label{ch1:pth}

Let $M\tr^*_{\th}P\tr_{\mu_0}N$. Then, $
M\tr^*_{\mu_0}Q\tr^*_{\th}N$ for some $Q$ such that
$lg(M\tr_{\th}^*P)=lg(Q\tr_{\th}^*N)$.

\end{lemma}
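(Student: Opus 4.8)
The plan is to postpone the block of $\tr_{\th}$-steps past the single $\tr_{\mu_0}$-step by an induction on $n = lg(M\tr^*_{\th}P)$, the whole argument resting on the local commutation given by lemma \ref{ch1:bth}(2). The base case $n=0$ is immediate: then $M=P\tr_{\mu_0}N$, so $Q=N$ works with $lg(M\tr^*_{\mu_0}Q)$ arbitrary and $lg(Q\tr^*_{\th}N)=0=n$. For the inductive step I would write the reduction as $M\tr_{\th}M'\tr^*_{\th}P\tr_{\mu_0}N$ where $M'\tr^*_{\th}P$ has $n-1$ steps, apply the induction hypothesis to $M'\tr^*_{\th}P\tr_{\mu_0}N$ to obtain $Q_0$ with $M'\tr^*_{\mu_0}Q_0\tr^*_{\th}N$ and $lg(Q_0\tr^*_{\th}N)=n-1$, and then be left with the single leading $\th$-step sitting in front of a block of $\mu_0$-steps, namely $M\tr_{\th}M'\tr^*_{\mu_0}Q_0$. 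The task thus reduces to commuting one $\th$-step to the right of a finite sequence of $\mu_0$-steps.

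For that I would isolate an auxiliary commutation statement: \emph{if $M\tr_{\th}M'\tr^*_{\mu_0}R$, then $M\tr^*_{\mu_0}Q\tr_{\th}R$ for some $Q$}, i.e. the single $\th$ can be pushed past the $\mu_0$-block while remaining a single $\th$ at the end. I prove this by induction on the length $k$ of the block $M'\tr^*_{\mu_0}R$. For $k=0$ it is trivial. For $k\geq 1$ write $M\tr_{\th}M'\tr_{\mu_0}M''\tr^*_{\mu_0}R$ and apply lemma \ref{ch1:bth}(2) to $M\tr_{\th}M'\tr_{\mu_0}M''$: in both of its cases one gets $M\tr^+_{\mu_0}Q_1\tr_{\th}M''$ (one or two $\mu_0$-steps followed by exactly one $\th$-step). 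Since the tail now reads $Q_1\tr_{\th}M''\tr^*_{\mu_0}R$ with a shorter $\mu_0$-block, the induction hypothesis yields $Q_1\tr^*_{\mu_0}Q\tr_{\th}R$, and concatenating gives $M\tr^*_{\mu_0}Q\tr_{\th}R$ as required.

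Feeding this auxiliary statement into the outer step, the configuration $M\tr_{\th}M'\tr^*_{\mu_0}Q_0$ becomes $M\tr^*_{\mu_0}Q\tr_{\th}Q_0$, and composing with $Q_0\tr^*_{\th}N$ (which has $n-1$ steps) produces $M\tr^*_{\mu_0}Q\tr^*_{\th}N$ with exactly $1+(n-1)=n$ trailing $\th$-steps, giving the desired equality $lg(M\tr^*_{\th}P)=lg(Q\tr^*_{\th}N)$. The length equality is in fact automatic from the shape of lemma \ref{ch1:bth}(2), which always trades one $\th$-step for exactly one $\th$-step while only creating new $\mu_0$-steps. The main obstacle I anticipate is precisely this proliferation of $\mu_0$-steps: the ``two $\mu_0$'' branch of lemma \ref{ch1:bth}(2) forbids a naive induction on the total number of steps, so care is needed to organize the two nested inductions (outer on the number of $\th$-steps, inner on the length of the $\mu_0$-block) so that each application of \ref{ch1:bth}(2) strictly shortens the block being processed even as it lengthens the prefix already handled.
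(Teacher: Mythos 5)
Your proof is correct and takes essentially the same route as the paper: the paper's proof is precisely the outer induction on $lg(M\tr_{\th}^*P)$, with the commutation of a single $\th$-step past the $\mu_0$-block via lemma \ref{ch1:bth}(2) left implicit. Your explicit inner induction on the length of the $\mu_0$-block (noting that both branches of \ref{ch1:bth}(2) preserve the $\th$-step as a single step, so only the block length matters) just fills in the detail the paper omits.
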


\begin{proof}
By induction on $lg(M\tr_{\th}^*P)$.
\end{proof}

\begin{theorem} \label{ch1:ppt} Let $M\tr^*_{\th}P\tr_{\b \m  \rho}N$.
Then, $ M\tr_{\b \m  \rho}^+Q\tr^*_{\th}N$ for some $Q$.
\end{theorem}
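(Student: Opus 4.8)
The plan is to prove the statement by induction on $n = lg(M\tr^*_\th P)$, the number of $\th$-steps in the prefix, commuting the single $\b\m\r$-step leftward past the $\th$-steps one at a time by means of the local commutation facts of Lemma \ref{ch1:bth}, and using Lemma \ref{ch1:pth} to reabsorb the auxiliary $\mu_0$-steps that this commutation generates. The base case $n = 0$ is immediate: then $M = P \tr_{\b\m\r} N$, so we take $Q = N$. For the inductive step I would write the reduction as $M\tr^*_\th P'\tr_\th P\tr_{\b\m\r} N$ with $lg(M\tr^*_\th P') = n-1$, and apply Lemma \ref{ch1:bth} to the last two steps $P'\tr_\th P\tr_{\b\m\r} N$, distinguishing the kind of the final $\b\m\r$-step.

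The easy cases are handled directly by the induction hypothesis. If the last step is a $\r$-step, Lemma \ref{ch1:bth}(3) gives $P'\tr_\r Q_1\tr_\th N$, so that $M\tr^*_\th P'\tr_\r Q_1$ has a $\th$-prefix of length $n-1$; the induction hypothesis then yields $M\tr^+_{\b\m\r} Q\tr^*_\th Q_1$, and appending $Q_1\tr_\th N$ gives $M\tr^+_{\b\m\r} Q\tr^*_\th N$. If the last step is a $\b\m$-step, Lemma \ref{ch1:bth}(1) offers two cases; in the good case $P'\tr_{\b\m} Q_1\tr^*_\th N$, and applying the induction hypothesis to $M\tr^*_\th P'\tr_{\b\m} Q_1$ finishes the argument in the same way.

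The main obstacle is the remaining (bad) case of Lemma \ref{ch1:bth}(1), namely $P'\tr_{\mu_0} R\tr_{\b\m} Q_1\tr_\th N$, where commuting the $\th$-step across the $\b\m$-step produces a fresh $\mu_0$-step sitting to the \emph{left} of the surviving $\th$-prefix $M\tr^*_\th P'$. This is exactly what Lemma \ref{ch1:pth} is designed for: applied to $M\tr^*_\th P'\tr_{\mu_0} R$ it yields $M\tr^*_{\mu_0} S\tr^*_\th R$ with $lg(S\tr^*_\th R) = lg(M\tr^*_\th P') = n-1$. Since every $\mu_0$-step is a $\m$-step we have $M\tr^*_{\b\m\r} S$, and the induction hypothesis applied to $S\tr^*_\th R\tr_{\b\m} Q_1$ (whose $\th$-prefix has length $n-1$) gives $S\tr^+_{\b\m\r} Q\tr^*_\th Q_1$. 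Composing, $M\tr^+_{\b\m\r} Q$ and $Q\tr^*_\th Q_1\tr_\th N$, completing the induction. The delicate point throughout is the control of the $\th$-length: Lemma \ref{ch1:pth} guarantees precisely that pulling the $\th$-prefix through the newly created $\mu_0$-step does not lengthen it, so the induction measure stays strictly below $n$ and the argument is well-founded.
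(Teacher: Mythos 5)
Your proof is correct and follows exactly the route the paper intends: induction on $lg(M\tr^*_{\th}P)$, using Lemma \ref{ch1:bth} for the local commutations (with the $\mu_0$-case as the only delicate point) and Lemma \ref{ch1:pth} to reabsorb the generated $\mu_0$-steps while keeping the $\th$-prefix length at $n-1$ so the induction stays well-founded. The paper's own proof is stated only as ``by induction on $lg(M\tr_{\th}^*P)$'', and your argument supplies precisely the details it leaves implicit.
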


\begin{proof}
By induction on $lg(M\tr_{\th}^*P)$.
\end{proof}

\begin{lemma} \label{ch1:brh}
\begin{enumerate}
\item Let $M\tr_{\rho}P\tr_{\b}N$. Then $M\tr_{\b}Q\tr^*_{\rho}N$ for some
$Q$.
\item Let $M$, $M'$, $N$ be such that
$M\tr_{\rho}M'$ and $\a \notin Fv(N)$. Then either $M[(\a \
L]:=(\a \ (L \ N))]\tr_{\rho}M'[(\a \ L]:=(\a \ (L \ N))]$  or
$M[(\a \ L]:=(\a \ (L \ N))]\tr_{\mu}P\tr_{\rho}M'[(\a \ L]:=(\a \
(L \ N))]$  for some $P$.
\item Let $M\tr_{\rho}P\tr_{\mu}N$. Then
$ M\tr_{\mu}Q\tr^*_{\rho}N$ for some $Q$.
\end{enumerate}
\end{lemma}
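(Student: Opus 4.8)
The plan is to prove all three statements by induction on the structure of $M$, in each case locating the $\rho$-redex $(\b\;\m\a'.R)$, which contracts to $R[\a':=\b]$, and analysing its position relative to the other redex. Throughout I will use two elementary observations. First, a $\rho$-step merely renames a $\m$-variable, so it can create neither a $\b$- nor a $\m$-redex, except in the degenerate situation where the named term $(\b\;\m\a'.R)$ itself stands in function position of an application; since in the typed calculus a named term has type $\bot$ and cannot be applied, this situation does not occur and may be discarded. Second, the renaming $[\a':=\b]$ and the $\m$-substitution $\s=[(\a\;L):=(\a\;(L\;N))]$ commute with $\b$-, $\m$- and $\rho$-reduction whenever the names involved are pairwise distinct; the side condition $\a\notin Fv(N)$ together with the freshness of the bound name $\a'$ guarantees this.

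For \textbf{(1)}, if the $\b$- and $\rho$-redexes are disjoint I simply swap them (one $\rho$-step). If the $\b$-redex lies inside the contractum $R[\a':=\b]$, then by the first observation it already occurs in $R$; I perform it in $R$ first and then $\rho$-contract, using that $[\a':=\b]$ commutes with $\b$-reduction, so again a single $\rho$-step. If the $\rho$-redex lies in the body of the abstraction of the $\b$-redex, the substitution leaves it intact and one $\rho$-step suffices. The only case producing $\tr_\r^*$ rather than a single step is when the $\rho$-redex lies in the argument of the $\b$-redex: this argument may be duplicated (or erased) by the substitution, so the postponed $\rho$-reduction is performed once per copy.

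Statement \textbf{(2)} is the heart of the matter and I would prove it by induction on $M$, the only interesting position being the $\rho$-redex itself. If its head $\b$ differs from the substituted name $\a$, then $\s$ passes through the redex untouched and a single $\rho$-step suffices, the required identity $(R\s)[\a':=\b]=(R[\a':=\b])\s$ following from the commutation of the two substitutions. If instead $\b=\a$, the $\rho$-redex $(\a\;\m\a'.R)$ is turned by $\s$ into $(\a\;(\m\a'.(R\s)\;N))$, which contains a fresh $\m$-redex; contracting it and then $\rho$-reducing yields the second alternative $M\s\tr_\m P\tr_\r M'\s$. The crux is to verify the substitution-commutation identity
$$\bigl((R\s)[(\a'\;K):=(\a'\;(K\;N))]\bigr)[\a':=\a]=(R[\a':=\a])\s,$$
which I expect to establish by a routine induction on $R$, using $\a'\notin Fv(N)$. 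This identity is the technical core of the whole lemma.

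Finally \textbf{(3)} follows by induction on $M$ with a case analysis paralleling (1). Disjoint redexes are swapped, and a $\m$-redex sitting inside a $\rho$-contractum is handled as before, since renaming commutes with the $\m$-substitution. When the $\rho$-redex lies in the argument of the $\m$-redex, the $\m$-substitution copies that argument, producing the several $\rho$-steps recorded by $\tr_\r^*$. The remaining case, the $\rho$-redex inside the body of the contracted $\m$-abstraction $\m\g.S$, is exactly an instance of (2), taken with $\a:=\g$ and $N$ the argument of the $\m$-redex (note $\g\notin Fv(N)$ by the variable convention); after the outer $\m$-step, statement (2) furnishes the residual $\rho$- or $\m\rho$-reduction down to $N$. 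The main obstacle is precisely this interaction governed by (2): at the head-coincidence peak a $\rho$-step can be postponed across a $\m$-substitution only at the price of inserting a $\m$-step, so the delicate point is to check that the inserted $\m$-step together with the ensuing $\rho$-step reconstitutes the correct term.
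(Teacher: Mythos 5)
Your treatment of (1) and (2) is essentially sound, and the commutation identity you isolate in (2) is indeed the technical core; note only that your appeal to typing in (1) is not a cosmetic shortcut but a genuine strengthening of the hypotheses, since the paper states the lemma with no typing assumption and (1) is false for raw terms (e.g. $((\b\;\m\a.\l x.x)\;y)\tr_\r((\l x.x)\;y)\tr_\b y$, yet the starting term has no $\b$-redex at all). This restriction is harmless for the paper's applications, which only ever concern (translations of) typed terms, but it should be stated explicitly, together with the subject-reduction property that keeps it usable along reduction sequences.

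The genuine gap is in (3), and you half-see it yourself. In the case you call the head-coincidence peak --- $M=C[((\m\g.S)\;N_0)]$ with the $\r$-redex inside $S$ and its head equal to $\g$ --- your appeal to (2) yields $M\tr_\m C[\m\g.(S\s)]\tr_\m C[\m\g.P']\tr_\r N$: \emph{two} $\m$-steps before the $\r$-step. This is not of the form $M\tr_\m Q\tr_\r^* N$ demanded by statement (3), which allows exactly one $\m$-step; you call this ``the main obstacle'' but never reconcile it with what you are supposed to prove, so as written your argument only establishes the weaker claim $M\tr_\m^+ Q\tr_\r^* N$. Moreover this cannot be repaired, because (3) as stated is false, even for typed terms. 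Take $M=((\m\g.(\g\;\m\a.(\a\;x)))\;y)$, typable with $x:A\f B$, $y:A$, $\a,\g:\neg(A\f B)$. Then $M\tr_\r((\m\g.(\g\;x))\;y)\tr_\m\m\g.(\g\;(x\;y))=N$; but the only $\m$-redex of $M$ is the one at the root, giving $Q=\m\g.(\g\;((\m\a.(\a\;x))\;y))$, which contains no $\r$-redex whatsoever, so $Q\not\tr_\r^* N$: reaching $N$ requires a second $\m$-step, $Q\tr_\m\m\g.(\g\;\m\a.(\a\;(x\;y)))\tr_\r N$. So your case analysis is in fact correct mathematics, but it refutes (3) rather than proving it; the honest conclusion is that (3) --- and, downstream, Theorem \ref{ch1:ppr} --- must be weakened to allow $\tr_\m^+$ (resp. $\tr_{\b\m}^+$) in front, which still appears compatible with Theorem \ref{ch1:pprt} (whose conclusion already reads $\tr_{\b\m}^+$), though the inductions there then need corresponding adjustment. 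The paper's own one-line proof (``by induction on $M$'') glosses over exactly this point; your write-up should have flagged the discrepancy between the output of case (2) and the statement of (3) instead of passing over it.
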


\begin{proof}
By induction on $M$.
\end{proof}

\begin{theorem} \label{ch1:ppr}
Let $M\tr^*_{\rho}P\tr_{\b \m}N$. Then $M\tr_{\b \m}Q\tr^*_{\rho}N$ for some $Q$.
\end{theorem}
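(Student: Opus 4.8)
The plan is to prove the statement by induction on $n = lg(M\tr^*_{\rho}P)$, the number of $\rho$-steps in the given reduction, using Lemma~\ref{ch1:brh} as the single-step engine that postpones one $\rho$-step past one $\b$- or $\m$-step. If $n=0$ then $M=P$ and $M\tr_{\b\m}N$; taking $Q=N$ together with the empty reduction $N\tr^*_{\rho}N$ already gives the conclusion.

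For the inductive step, I would split off the last $\rho$-step of the prefix, writing $M\tr^*_{\rho}P'\tr_{\rho}P\tr_{\b\m}N$, where $M\tr^*_{\rho}P'$ has $n-1$ steps. The single step $P\tr_{\b\m}N$ is either a $\b$-step or a $\m$-step. In the first case I apply Lemma~\ref{ch1:brh}(1) to $P'\tr_{\rho}P\tr_{\b}N$, and in the second Lemma~\ref{ch1:brh}(3) to $P'\tr_{\rho}P\tr_{\m}N$; in either case I obtain $P'\tr_{\b\m}Q'\tr^*_{\rho}N$ for some $Q'$. Now the configuration $M\tr^*_{\rho}P'\tr_{\b\m}Q'$ has a $\rho$-prefix of length $n-1$, so the induction hypothesis yields $M\tr_{\b\m}Q\tr^*_{\rho}Q'$ for some $Q$. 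Concatenating $Q\tr^*_{\rho}Q'$ with $Q'\tr^*_{\rho}N$ gives $Q\tr^*_{\rho}N$, and hence $M\tr_{\b\m}Q\tr^*_{\rho}N$, as required.

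The only genuine content lies in Lemma~\ref{ch1:brh}, which handles the one-step against one-step commutations; in particular, part~(3) records that pushing a $\rho$-step past a $\m$-step can create several new $\rho$-redexes, which is exactly why the conclusions there are $\tr^*_{\rho}$ rather than $\tr_{\rho}$. That lemma is assumed here, so in the present theorem the work is purely the bookkeeping of the induction: performing the case split between $\b$ and $\m$, and correctly concatenating the two $\rho$-tails that arise, namely the one produced by Lemma~\ref{ch1:brh} and the one carried by the induction hypothesis. I do not expect any difficulty with the $\mu_0$-restricted rule used in the $\tr_{\th}$-postponement arguments, since Lemma~\ref{ch1:brh} is stated directly for the full $\m$-rule; thus no extra analysis of that restricted form is needed here.
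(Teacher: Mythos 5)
Your proof is correct and follows exactly the route the paper intends: induction on $lg(M\tr^*_{\rho}P)$, with Lemma~\ref{ch1:brh} (parts (1) and (3)) supplying the one-step commutations, and careful concatenation of the resulting $\rho$-tails. The paper's own proof is just the phrase ``by induction on $lg(M\tr_{\rho}^*P)$,'' and your write-up is a faithful, correctly detailed expansion of it.
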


\begin{proof}
By induction on $lg(M\tr_{\rho}^*P)$.
\end{proof}

\subsection{Commutation lemmas}

The goal of this section is lemma \ref{diag*} below. Its proof
necessitates some preliminary lemmas.

\begin{lemma}\label{diag}
\begin{enumerate}
\item If $M \tr_{\r} P$ and $M \tr_{\r\th} Q$, then $P = Q$ or $P
\tr_{\r\th} N$ and $Q \tr_{\r} N$ for some $N$.
\item If $M \tr_{\r} P$ and $M \tr_{\b\m} Q$, then $P \tr_{\b\m} N$
and $Q \tr_{\r}^* N$ for some $N$.
\end{enumerate}
\end{lemma}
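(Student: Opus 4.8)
The plan is to prove both diagrams by induction on the structure of the term $M$, tracking where the two redexes of interest are located relative to one another. The two parts are commutation lemmas of a familiar flavor: part (1) is a local confluence statement between $\tr_\r$ and $\tr_{\r\th}$, and part (2) shows that a $\tr_\r$ step commutes over a $\tr_{\b\m}$ step, producing a $\tr_{\b\m}$ step followed by possibly several $\tr_\r$ steps. In both cases the engine is a case analysis on whether the two redexes are disjoint, nested, or overlapping.

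For part (1), I would first handle the easy \emph{disjoint} case: if the $\tr_\r$-redex and the $\tr_{\r\th}$-redex occupy non-overlapping positions, the two rewrites commute on the nose and the common reduct $N$ is obtained by firing the other redex in each of $P$ and $Q$; this gives $P \tr_{\r\th} N$ and $Q \tr_\r N$. The \emph{nested} case, where one redex lies inside a subterm duplicated or relocated by the other, is handled by the induction hypothesis applied to that subterm. The genuinely delicate situation is \emph{overlap}: when the two contracted redexes are the same occurrence, one finds $P = Q$, which is exactly the first alternative in the conclusion. Since $\tr_\r$ contracts $(\b \; \m\a.M)$ and $\tr_\th$ contracts $\m\a.(\a\;M)$ with $\a \notin Fv(M)$, I would check that a $\tr_\r$-redex and a $\tr_\th$-redex can only critically interact when the inner $\m\a.(\a\;M)$ sits as the argument of an application; there one verifies directly that the two resulting terms either coincide or join in one further step each.

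For part (2), the same trichotomy applies, but now the subtlety is that firing the $\tr_\r$-redex first can force \emph{more than one} $\tr_\r$ step afterward, which is why the conclusion allows $Q \tr_\r^* N$ rather than a single step. The mechanism is duplication: a $\m$-reduction (or a $\b$-reduction) can copy the context in which the $\r$-redex $(\b\;\m\a.L)$ sits, or can copy the substitution $[\a := \b]$ performed by the $\r$-step, so that after taking the $\b\m$ step first one may need to replay the renaming $\r$-step in several copies. I would organize the argument so that $P \tr_{\b\m} N$ is obtained by performing on $P$ exactly the $\b\m$-contraction that took $M$ to $Q$ (it survives, since $\tr_\r$ only renames $\m$-variables and does not erase the $\b\m$-redex), and then $Q \tr_\r^* N$ collects the finitely many residuals of the original $\r$-redex.

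The main obstacle I expect is the bookkeeping in the overlapping and duplicating cases of part (2): precisely identifying the residuals of the $\tr_\r$-redex after a $\m$-step, since the $\m$-rule rewrites every occurrence of the form $(\a\;L)$, and matching these residuals against a single $\tr_{\b\m}$ step on $P$. The renaming nature of $\tr_\r$ (it only substitutes one $\m$-variable for another and does not create or destroy $\b$- or $\m$-redexes apart from the one it touches) is what keeps this manageable and guarantees that the $\b\m$-redex descends intact to $P$; I would isolate this as the key observation and let the structural induction carry the rest through the routine syntactic cases.
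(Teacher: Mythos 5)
Your treatment of part (1) is sound: the only critical overlaps between $\tr_{\r}$ and $\tr_{\r\th}$ are $(\b\;\m\a.(\a\;K))$ with $\a\notin Fv(K)$ and $\m\g.(\g\;\m\a.K)$ with $\g\notin Fv(\m\a.K)$, and both give $P=Q$ (up to $\a$-equivalence), while disjoint and nested redexes commute in one step each because the renaming $[\a:=\b]$ neither duplicates nor destroys redexes. The genuine gap is in part (2), at exactly the point you flag as your "key observation". You claim that after performing the $\b\m$-contraction, the residuals of the $\r$-redex are again $\r$-redexes, so that $Q\tr_{\r}^* N$ can simply fire them. This is false when the $\r$-redex is named by the $\m$-variable of the contracted $\m$-redex, i.e.\ when the $\m$-redex is $(\m\a.C[(\a\;\m\d.K)]\;N)$ and the $\r$-redex is $(\a\;\m\d.K)$: the $\m$-substitution $[(\a\;L):=(\a\;(L\;N))]$ turns that subterm into $(\a\;(\m\d.K'\;N))$, whose argument is now an application, so it is no longer a $\r$-redex; replaying the renaming on $Q$'s side then requires a further $\m$-step, not only $\r$-steps.

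Concretely, let $M=(\m\a.(\a\;\m\d.(\d\;x))\;y)$. Then $M\tr_{\r} P=(\m\a.(\a\;x)\;y)$ and $M\tr_{\m} Q=\m\a.(\a\;(\m\d.(\d\;x)\;y))$. The unique $\b\m$-step from $P$ yields $N=\m\a.(\a\;(x\;y))$, whereas $Q$ is in $\r$-normal form and $Q\neq N$; the diagram only closes as $Q\tr_{\m}\m\a.(\a\;\m\d.(\d\;(x\;y)))\tr_{\r} N$, with an extra $\m$-step on $Q$'s side. So the case analysis cannot be completed along the lines you describe, and this is not a repairable detail of bookkeeping: the example above satisfies the hypotheses of part (2) but not its conclusion, so any correct treatment must single out the subcase where the naming variable of the $\r$-redex is the bound variable of the contracted $\m$-redex and account for the extra $\m$-step it forces, which affects the statement being proved and not merely its proof. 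For what it is worth, the paper's own proof of this lemma is the single sentence "by simple case analysis", and the interaction above is precisely the case such an analysis must confront; your more explicit sketch at least has the merit of exposing where it breaks.
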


\begin{proof}
 By simple case analysis.
\end{proof}

\begin{lemma}\label{new}
\begin{enumerate}
\item If $M \tr_{\r}^* P$ and $M \tr_{{\r\th}^1} Q$, then $P
\tr_{{\r\th}^1} N$ and $Q \tr_{\r}^* N$ for some $N$.
\item If $M \tr_{\r}^* P$ and $M \tr_{\r\th}^* Q$, then $P
\tr_{\r\th}^* N$ and $Q \tr_{\r}^* N$ for some $N$.
\item If $M \tr_{\r}^* P$ and $M \tr_{\b\m} Q$, then $P \tr_{\b\m} N$
and $Q \tr_{\r}^* N$ for some $N$.
\end{enumerate}
\end{lemma}

\begin{proof}
\begin{enumerate}
\item By induction on $\eta_{\r}(M)$. Use (1) of lemma \ref{diag}.
\item By induction on $lg(M \tr_{\r\th}^* Q)$. Use (1).
\item By induction on $\eta_{\r}(M)$. Use (2) of lemma \ref{diag}.
\end{enumerate}
\end{proof}

\begin{lemma}\label{diag*}
If $M \tr_{\r}^* P$ and $M \tr_{\b\m\r\th}^* Q$, then $P
\tr_{\b\m\r\th}^* N$, $Q \tr_{\r}^* N$ for some $N$ and
$lg_{\b\m}(P \tr_{\b\m\r\th}^* N) = lg_{\b\m}(M \tr_{\b\m\r\th}^*
Q)$.

\end{lemma}

\begin{proof}
By induction on $lg_{\b\m}(M \tr_{\b\m\r\th}^* Q)$. If $M
\tr_{\b\m\r\th}^* M_1 \tr_{\b\m} M_2 \tr_{\r\th}^* Q$, then, by
induction hypothesis, $P \tr_{\b\m\r\th}^* N_1$, $M_1 \tr_{\r}^*
N_1$ and $lg_{\b\m}(P \tr_{\b\m\r\th}^* N_1) = lg_{\b\m}(M \tr^*
M_1)$. By (3) of lemma \ref{new}, $N_1 \tr_{\b\m} N_2$ and $M_2
\tr_{\r}^* N_2$ for some $N_2$. And finally, by (2) of lemma
\ref{new}, $N_2 \tr_{\r\th}^* N$ and $Q \tr_{\r}^* N$ for some
$N$. Thus $P \tr_{\b\m\r\th}^* N$, $Q \tr_{\r}^* N$ and
$lg_{\b\m}(P \tr_{\b\m\r\th}^* N) = lg_{\b\m}(M \tr_{\b\m\r\th}^*
Q)$.
\end{proof}

\section{A translation of the $\l\m^{\f\et\ou}$-calculus into the
$\l\m$-calculus}\label{6}

We code  $\et$ and $\ou$ by their usual equivalent (using $\f$ and
$\bot$) in classical logic.

\begin{definition}
We define the translation $A^{\circ} \in \cal{T}$ of a type $A \in \cal{T}'$ by
induction on $A$
 as follows.
\begin{itemize}
\item $\{A\}^{\circ}= A$ for $A \in {\cal A}\cup \{\bot\}$
\item $\{A_1 \f A_2\}^{\circ}= A_1^{\circ} \f A_2^{\circ}$
\item $\{A_1 \et A_2\}^{\circ}= \neg (A_1^{\circ} \f (A_2^{\circ} \f
  \bot))$
\item $\{A_1 \ou A_2\}^{\circ}= \neg A_1^{\circ} \f (\neg A_2^{\circ} \f
  \bot)$
\end{itemize}
\end{definition}

\begin{lemma}
For every $A \in {\cal T}'$, $A^{\circ}$ is classically equivalent to
$A$.
\end{lemma}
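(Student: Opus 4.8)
The plan is to argue by a straightforward induction on the structure of the type $A \in \tst$, using throughout the fact that classical provable equivalence (which I write $\equiv$, not to be confused with the congruence $\approx$) is a congruence: if $B \equiv B'$ and $C \equiv C'$ classically, then $B \f C \equiv B' \f C'$, $\; B \et C \equiv B' \et C'$, $\; B \ou C \equiv B' \ou C'$ and $\neg B \equiv \neg B'$. This is what lets me, in each inductive step, replace the immediate subformulas by their translations once the induction hypothesis has been applied.

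For the base case, when $A \in {\cal A} \cup \{\bot\}$ we have $A^{\circ} = A$ and there is nothing to prove. For the implication case $A = A_1 \f A_2$, the induction hypothesis gives $A_i^{\circ} \equiv A_i$, so by congruence $A^{\circ} = A_1^{\circ} \f A_2^{\circ} \equiv A_1 \f A_2 = A$.

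The two remaining cases reduce to standard classical tautologies. For $A = A_1 \et A_2$, I would use that $\neg(B \f \neg C)$ is classically equivalent to $B \et C$; since $A_2^{\circ} \f \bot$ is exactly $\neg A_2^{\circ}$, the translation $A^{\circ} = \neg(A_1^{\circ} \f (A_2^{\circ} \f \bot))$ is $\neg(A_1^{\circ} \f \neg A_2^{\circ})$, which is classically equivalent to $A_1^{\circ} \et A_2^{\circ}$, and this is $\equiv A_1 \et A_2 = A$ by the induction hypothesis and congruence. For $A = A_1 \ou A_2$, I would first rewrite $A^{\circ} = \neg A_1^{\circ} \f (\neg A_2^{\circ} \f \bot) = \neg A_1^{\circ} \f \neg\neg A_2^{\circ}$; classical double-negation elimination gives $\neg\neg A_2^{\circ} \equiv A_2^{\circ}$, whence $A^{\circ} \equiv \neg A_1^{\circ} \f A_2^{\circ}$, and the standard equivalence $\neg B \f C \equiv B \ou C$ together with the induction hypothesis yields $A^{\circ} \equiv A_1 \ou A_2 = A$.

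There is essentially no real obstacle here: the entire content is the congruence property of $\equiv$ together with the elementary classical tautologies $B \et C \equiv \neg(B \f \neg C)$, $\; B \ou C \equiv \neg B \f C$, and $\neg\neg B \equiv B$. The only point worth flagging is that the equivalence is asserted in full classical logic, so one must genuinely invoke classical—not merely intuitionistic—reasoning in the $\et$ case and in the double-negation step of the $\ou$ case, since intuitionistically these equivalences fail.
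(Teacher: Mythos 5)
Your proposal is correct and follows exactly the route the paper intends: the paper's proof is simply ``By induction on $A$,'' and your argument is the natural filling-in of that induction, using the standard classical equivalences $B \et C \equiv \neg(B \f \neg C)$, $B \ou C \equiv \neg B \f C$, and $\neg\neg B \equiv B$ together with the congruence property of classical equivalence. Your closing remark that the $\et$ case and the double-negation step are genuinely classical (not intuitionistic) is accurate and consistent with the paper's choice of coding.
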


\begin{proof}
By induction on  $A$.
\end{proof}

\begin{definition}
Let $\varphi$ a special $\m$-variable. A term $M \in {\cal M}''$ is
translated into a $\l\m$-term $M^{\circ}$ as follows:
\begin{itemize}
\item $\{x\}^{\circ}= x$
\item $\{\l x. M\}^{\circ}= \l x. M^{\circ}$
\item $\{(M \; N)\}^{\circ}= (M^{\circ} \; N^{\circ})$
\item $\{\m \a. M\}^{\circ} = \m \a.  M^{\circ}$
\item $\{(\a \; M)\}^{\circ} = (\a \; M^{\circ})$
\item $\{\<M ,N\>\}^{\circ}= \l x. (x \; M^{\circ} \; N^{\circ})$
\item $\{M \pi_i\}^{\circ}= \m \a. (\vp \; (M^{\circ} \; \l x_1.\l x_2.
  \m \g. (\a \; x_i)))$ where $\g$ is a fresh variable
\item $\{M \; [x_1.N_1 , x_2.N_2]\}^{\circ}= \m \a. (\vp \; (M^{\circ}
  \; \l x_1. \m \g. (\a \; N_1^{\circ}) \; \l x_2. \m \g. (\a \;
  N_2^{\circ})))$ where $\g$ is a fresh variable
\item $\{\o_i M\}^{\circ}= \l x_1. \l x_2. (x_i \; M^{\circ})$
\end{itemize}
\end{definition}

\noindent {\bf Remarks}
\begin{itemize}
\item The introduction of the free variable $\vp$ in the definition of
$\{M \; [x_1.N_1 , x_2.N_2]\}^{\circ}$ and $\{M \pi_i\}^{\circ}$ is
not necessary for lemma \ref{coding2}. The reason of this introduction
is that, otherwise, to simulate the reductions of the
$\l\m^{\f\et\ou}$-calculus we would have to introduce new reductions
rules for the $\l\m$-calculus and thus to prove $SN$ of this extension
whereas, using $\vp$, the simulation is done with the usual rules of
the $\l\m$-calculus.
  \item There is another  way of coding $\et$ and $\ou$  by using
intuitionistic second order logic.

\begin{itemize}
\item $\{A_1 \et A_2\}^{\circ}= \forall X ((A_1^{\circ} \f
  (A_2^{\circ} \f X)) \f X)$
\item $\{A_1 \ou A_2\}^{\circ}= \forall X ((A_1^{\circ} \f X) \f
  ((A_2^{\circ} \f X) \f X))$
\end{itemize}

The translation of  $\{\<M ,N\>\}^{\circ}$ and $\{\o_i
M\}^{\circ}$ are the same but the translation of $\{M
\pi_i\}^{\circ}$ will be $(M^{\circ} \; \l x_1.\l x_2.x_i)$ and
the one of $\{M \; [x_1.N_1 , x_2.N_2]\}^{\circ}$ would be $
(M^{\circ} \; \l x_1.N_1^{\circ} \; \l x_2.N_2^{\circ})$. But it
is easily checked that the permutative conversions are not
correctly simulated by this translation whereas, in our
translation, they are.
\item Finally  note that, as given in
definition \ref{def_red},  the reduction rules for the
$\l\m^{\f\et\ou}$-calculus do not include $\tr_{\r}$ and
$\tr_{\th}$. We could have added them  and the given translation
 would have worked in a similar way. We decided not to do so
(although these rules were already considered by Parigot) because
they, usually, are not included neither in the $\l\m$-calculus nor
in the $\l\m^{\f\et\ou}$-calculus. Moreover some of the lemma
given below would need a bit more complex statement.

\end{itemize}

\begin{lemma}
\begin{enumerate}
\item $\{M[x:=N]\}^{\circ} = M^{\circ}[x:=N^{\circ}]$.
\item $\{M[(\a \; N):= (\a \; (N \; \ep))]\}^{\circ} = M^{\circ}[(\a \;
  N^{\circ}):= (\a \; \{(N \; \ep)\}^{\circ})]$.
\end{enumerate}
\end{lemma}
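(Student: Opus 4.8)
The plan is to prove both identities by structural induction on $M\in{\cal M}''$, treating the two substitution operators in parallel across the clauses of the translation. The elements of ${\cal E}$ (the projections $\pi_i$ and the case $[x_1.N_1,x_2.N_2]$) need no separate treatment, since they occur only inside an application $(M\;\ep)$ and are absorbed into the corresponding translation clause. For part (1) the base cases $M=x$ and $M=y\neq x$ are immediate, and every other clause is settled by unfolding both the translation and the $\l$-substitution $[x:=N^{\circ}]$ and invoking the \ih\ on the immediate subterms. The only thing to watch is that the auxiliary variables created by the translation — the bound $\m$-variables and the distinguished variable $\vp$ in the clauses for $\{M\pi_i\}^{\circ}$ and $\{M\;[x_1.N_1,x_2.N_2]\}^{\circ}$, together with the bound $\l$-variables $x_1,x_2$ in the clauses for $\o_i$, $\<\cdot,\cdot\>$ and the case — are, by the usual renaming convention, distinct from $x$, so that $[x:=N^{\circ}]$ commutes with all of them; part (1) then follows mechanically.

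For part (2) I would first record one clarifying observation that also guarantees the right-hand side is well defined. By inspection of the translation, $\{(N\;\ep)\}^{\circ}$ depends on $N$ only through $N^{\circ}$: it is of the form $C_{\ep}[N^{\circ}]$ for a fixed one-hole context $C_{\ep}$ determined by $\ep$ (writing $\cdot$ for the hole, $C_{\ep}=(\cdot\;\ep^{\circ})$ when $\ep$ is a term and $C_{\ep}=\m\b.(\vp\;(\cdot\;\l x_1.\l x_2.\m\g.(\b\;x_i)))$ when $\ep=\pi_i$, and similarly for the case). Hence the target operator $[(\a\;N^{\circ}):=(\a\;\{(N\;\ep)\}^{\circ})]$ is a genuine structural substitution: it rewrites every subterm $(\a\;L)$ into $(\a\;C_{\ep}[L])$, just as the source operator rewrites every $(\a\;L)$ into $(\a\;(L\;\ep))$. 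To keep the variable $\a$ of the substitution apart from the bound $\m$-variable written $\a$ in the $\pi_i$/case clauses, I rename the latter to a fresh $\b\neq\a$.

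The crucial case is then $M=(\a\;K)$. Writing $K'$ for $K$ after the source substitution, the left-hand side unfolds as $\{(\a\;(K'\;\ep))\}^{\circ}=(\a\;\{(K'\;\ep)\}^{\circ})$, since $(\a\;K)$ matches the rewrite pattern at its head. On the right-hand side the structural substitution first descends into $K^{\circ}$, replacing it by its image, which by the \ih\ equals $\{K'\}^{\circ}$; the resulting head redex $(\a\;\{K'\}^{\circ})$ then matches the target pattern and is rewritten to $(\a\;C_{\ep}[\{K'\}^{\circ}])=(\a\;\{(K'\;\ep)\}^{\circ})$, agreeing with the left-hand side. All remaining cases are routine: because the named applications $(\b\;\cdot)$, $(\g\;\cdot)$ and $(\vp\;\cdot)$ introduced by the $\pi_i$/case clauses have heads distinct from $\a$, the structural substitution slips past the $\m\b$, $\m\g$ binders and the $\vp$-application untouched and reaches only the genuinely translated subterms, where the \ih\ applies. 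The main difficulty here is not depth but bookkeeping: one must verify in the case $M=(\a\;K)$ that the outer named application is rewritten in lockstep on both sides, and confirm the ``no spurious $(\a\;\cdot)$ subterm'' fact — every $(\a\;\cdot)$ in $M^{\circ}$ comes from an $(\a\;\cdot)$ in $M$ — which is exactly what makes the target structural substitution act only where intended.
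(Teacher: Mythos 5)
Your proposal is correct and follows the same route as the paper: the paper's entire proof is ``By induction on $M$,'' and your structural induction (with the $C_{\ep}$ reading of the structural substitution and the key case $M=(\a\;K)$) is precisely the detailed elaboration of that induction. Nothing in your argument deviates from or adds to what the paper's one-line proof implicitly requires.
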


\begin{proof}
By induction on $M$.
\end{proof}

\begin{lemma}\label{coding2}
If $\G \v_{{\cal S}^{\f\et\ou}} M : A$, then $\G^{\circ} \v_{{\cal
S}^{\m}} M^{\circ} : A^{\circ}$ where $\G^{\circ}$ is obtained
from $\G$ by replacing  all the types by their translations and by
declaring $ \vp$ of type $ \neg \bot$.
\end{lemma}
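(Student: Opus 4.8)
The plan is to argue by structural induction on the typing derivation $\G \v_{{\cal S}^{\f\et\ou}} M : A$, showing that each typing rule is preserved by the translation $(-)^{\circ}$. Throughout, the context $\G^{\circ}$ always contains the declaration $\vp : \neg\bot$, which is never removed since $\vp$ is free and never bound. The base case ($ax$) and the purely implicative and classical cases ($\f_i$, $\f_e$, $\bot_i$, $\bot_e$) are immediate: the translation commutes with the corresponding term constructors, the type translation satisfies $\{A_1\f A_2\}^{\circ} = A_1^{\circ} \f A_2^{\circ}$ and fixes $\bot$, and the induction hypothesis applies directly. So the real content lies in the four rules for $\et$ and $\ou$, where one must unfold the type translation and exhibit an ${\cal S}^{\m}$-derivation by hand.

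For $\et_i$, the induction hypothesis gives $\G^{\circ} \v M^{\circ} : A_1^{\circ}$ and $\G^{\circ} \v N^{\circ} : A_2^{\circ}$; declaring $x : A_1^{\circ} \f (A_2^{\circ} \f \bot)$ I type $(x\;M^{\circ}\;N^{\circ}) : \bot$, whence $\l x.(x\;M^{\circ}\;N^{\circ}) : \neg(A_1^{\circ} \f (A_2^{\circ} \f \bot)) = \{A_1 \et A_2\}^{\circ}$ as required. For $\ou_i$, from $\G^{\circ} \v M^{\circ} : A_j^{\circ}$ together with $x_1 : \neg A_1^{\circ}$, $x_2 : \neg A_2^{\circ}$ I get $(x_j\;M^{\circ}) : \bot$, hence $\l x_1.\l x_2.(x_j\;M^{\circ}) : \neg A_1^{\circ} \f (\neg A_2^{\circ} \f \bot) = \{A_1\ou A_2\}^{\circ}$.

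The two elimination rules are the interesting ones, since they use the $\m$-binder, the fresh variable $\g$, and the free variable $\vp$. For $\et_e$, the induction hypothesis gives $\G^{\circ} \v M^{\circ} : \neg(A_1^{\circ} \f (A_2^{\circ} \f \bot))$. In the body of $\m\a.(\ldots)$ I add $\a : \neg A_i^{\circ}$; declaring the fresh $\g$ of type $\neg\bot$, I type the innermost $(\a\;x_i) : \bot$ (using $x_i : A_i^{\circ}$), hence $\m\g.(\a\;x_i) : \bot$, hence $\l x_1.\l x_2.\m\g.(\a\;x_i) : A_1^{\circ} \f (A_2^{\circ} \f \bot)$; feeding this to $M^{\circ}$ produces a term of type $\bot$, and $\vp : \neg\bot$ applied to it again has type $\bot$, so the whole $\m\a$-term has type $A_i^{\circ}$ by $\bot_e$. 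The case $\ou_e$ is entirely analogous: the induction hypotheses $\G^{\circ} \v M^{\circ} : \neg A_1^{\circ} \f (\neg A_2^{\circ} \f \bot)$ and $\G^{\circ}, x_i : A_i^{\circ} \v N_i^{\circ} : C^{\circ}$ let me type each branch $\l x_i.\m\g.(\a\;N_i^{\circ}) : \neg A_i^{\circ}$ (with $\a : \neg C^{\circ}$ from the outer $\m\a$ and $\g : \neg\bot$ fresh), so that applying $M^{\circ}$ to the two branches and then $\vp$ yields type $\bot$, and $\m\a.(\ldots) : C^{\circ}$.

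The only point that requires care---and it is bookkeeping rather than a genuine obstacle---is the treatment of the auxiliary $\m$-variables: I must check that $\g$, being fresh, may legitimately be declared of type $\neg\bot$ at each of its occurrences, and that $\vp : \neg\bot$ is present in $\G^{\circ}$ so that the applications $(\vp\;(\ldots))$ type-check. As the remark following the definition already observes, $\vp$ is in fact logically inessential here (its sole role is to make the reduction simulation go through with the standard $\l\m$-rules); dropping it, the elimination bodies already have type $\bot$, and the induction closes in exactly the same way.
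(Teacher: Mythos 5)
Your proof is correct and takes exactly the approach the paper intends: induction on the typing derivation $\G \v_{{\cal S}^{\f\et\ou}} M : A$, which the paper states without spelling out the cases. Your detailed verification of the $\et_i$, $\et_e$, $\ou_i$ and $\ou_e$ rules (including the typing of $\g : \neg\bot$ and $\vp : \neg\bot$, and the observation that $\vp$ is logically inessential, matching the paper's own remark) fills in precisely the bookkeeping the paper leaves to the reader.
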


\begin{proof}
By induction on a derivation of $\G \v_{{\cal S}^{\f\et\ou}} M : A$.
\end{proof}

\begin{lemma}\label{simulation2}
Let $M \in {\cal M}''$. If $M \tr N$, then there is $P \in {\cal M}'$
such that $M^{\circ} \tr_{\b\m\r\th}^* P$, $N^{\circ} \tr_{\r}^*P$ and
$lg_{\b\m}(M^{\circ} \tr_{\b\m\r\th}^* P) \geq 1$.
\end{lemma}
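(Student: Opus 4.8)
The plan is to argue by induction on the term $M$, distinguishing according to where the contracted redex sits. I call a case a \emph{head} case when the redex is at the root of $M$ and is an instance of one of the six rules of Definition \ref{def_red}, and a \emph{context} case when it lies inside a proper subterm. In every case I must produce a single $P$ to which $M^{\circ}$ reduces by $\tr_{\b\m\r\th}$ (using at least one $\b\m$-step) and to which $N^{\circ}$ reduces by $\tr_{\r}$ alone; the freedom of letting $N^{\circ}$ still $\r$-reduce is exactly what lets me absorb the mismatch created by the auxiliary $\m$-abstractions of the translation.

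The context cases are uniform. The translation $(\cdot)^{\circ}$ is compositional: $M^{\circ}$ is built from the translations of the immediate subterms of $M$ by $\l\m$-constructors, possibly wrapped in a fixed context of the shape $\m\a.(\vp\;(\,\cdot\,))$ when an eliminator $\pi_i$ or $[x_1.N_1,x_2.N_2]$ is present. Hence a reduction $M_0 \tr N_0$ inside a subterm $M_0$ corresponds to a reduction inside the matching occurrence of $M_0^{\circ}$ in $M^{\circ}$. The induction hypothesis applied to $M_0 \tr N_0$ yields a $P_0$ with $M_0^{\circ} \tr_{\b\m\r\th}^* P_0$, $N_0^{\circ} \tr_{\r}^* P_0$ and $lg_{\b\m}(M_0^{\circ} \tr_{\b\m\r\th}^* P_0) \geq 1$; since $\tr_{\b\m\r\th}^*$ and $\tr_{\r}^*$ are compatible with every $\l\m$-constructor, I reinsert both reductions into the common context to obtain the required $P$, inheriting the $lg_{\b\m}\geq 1$ condition.

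For the head cases I compute directly. The $\b$- and $\m$-rules are simulated in one step, using the substitution lemma above, and I take $P=N^{\circ}$. For the $\et$-elimination $(\<M_1,M_2\>\;\pi_i) \tr M_i$, a run of $\b$-steps reduces $M^{\circ}$ to $\m\a.(\vp\;\m\g.(\a\;M_i^{\circ}))$; as $\g$ and $\a$ are fresh, one $\r$-step erases $\g$ and one $\th$-step erases $\a$, giving $M_i^{\circ}=N^{\circ}$. For the $\ou$-elimination $(\o_i M_0\;[x_1.N_1,x_2.N_2]) \tr N_i[x_i:=M_0]$ the $\b$-steps produce $\m\a.(\vp\;\m\g.(\a\;N_i^{\circ}[x_i:=M_0^{\circ}]))$, and $N_i^{\circ}[x_i:=M_0^{\circ}]=N^{\circ}$ by the substitution lemma, so again $P=N^{\circ}$. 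For the generalized $\m$-rule and the permutative conversion, a single $\m$-step distributes the trailing eliminator into the two $\a$-named branches of the translation; when that eliminator is a plain term this already equals $N^{\circ}$, and $P=N^{\circ}$.

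The hard part is the remaining subcases, namely the permutative conversion $(M\;[x_1.N_1,x_2.N_2]\;\ep) \tr (M\;[x_1.(N_1\;\ep),x_2.(N_2\;\ep)])$ and the generalized $\m$-rule when the trailing eliminator $\ep$ is itself a $\pi_j$ or a $[y_1.P_1,y_2.P_2]$. There the two sides do not coincide on the nose: the translation $\{(N_i\;\ep)\}^{\circ}$ creates a fresh abstraction $\m\b'.(\vp\;(\cdots))$ on the $N^{\circ}$-side, while on the $M^{\circ}$-side the $\m$-step pushes the translated eliminator inward and a following $\r$-step against the leading $\vp$ substitutes $\vp$ for the consumed name. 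One then checks that a few $\r$-steps on each side lead to a common reduct $P$ (with $N^{\circ} \tr_{\r}^* P$), the two continuation variables matching up to renaming. This reconciliation is where all the bookkeeping lives; being a long but routine case analysis on $\ep$ and on the shape of the $\a$-named occurrences, I would carry it out in the appendix.
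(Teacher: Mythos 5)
Your proposal is correct and follows essentially the same route as the paper: a case analysis on the contracted redex (context cases handled by compatibility of the translation with reduction), where the easy rules are simulated exactly, and the permutative conversions and $\m$-rule with a trailing $\pi_j$ or case-eliminator are reconciled by a $\m$-step plus a $\r$-step against $\vp$ on the left and $\r$-steps on the right, exactly as in the paper's appendix. The only difference is presentational: you make the induction for closure under contexts explicit, whereas the paper treats only root redexes and leaves that closure implicit.
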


\begin{proof}
By case analysis. The details are given in the appendix, section
\ref{app}.
\end{proof}

\begin{lemma} \label{simulation3}
Let $M \in {\cal M}''$. If $M \tr^* N$, then there is $P\in {\cal M}'$
such that $M^{\circ} \tr_{\b\m\r\th}^* P$, $N^{\circ} \tr_{\r}^* P$
and $lg_{\b\m}(M^{\circ} \tr_{\b\m\r\th}^* P) \geq lg(M \tr^* N)$.
\end{lemma}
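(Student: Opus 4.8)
Lemma \ref{simulation3} is an iterated version of Lemma \ref{simulation2}, so the natural approach is induction on the number of steps $lg(M \tr^* N)$.

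The plan is to proceed by induction on $lg(M \tr^* N)$. The base case $lg(M \tr^* N) = 0$ is immediate: then $N = M$, and taking $P = M^{\circ}$ gives $M^{\circ} \tr_{\b\m\r\th}^* P$ and $N^{\circ} \tr_{\r}^* P$ trivially (both as empty reductions), with $lg_{\b\m}(M^{\circ} \tr_{\b\m\r\th}^* P) = 0 = lg(M \tr^* N)$.

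For the inductive step, suppose $M \tr^* N' \tr N$ where $lg(M \tr^* N') = lg(M \tr^* N) - 1$. By the induction hypothesis there is $P' \in {\cal M}'$ with $M^{\circ} \tr_{\b\m\r\th}^* P'$, $N'^{\circ} \tr_{\r}^* P'$, and $lg_{\b\m}(M^{\circ} \tr_{\b\m\r\th}^* P') \geq lg(M \tr^* N')$. Applying Lemma \ref{simulation2} to the single step $N' \tr N$ yields a $Q \in {\cal M}'$ with $N'^{\circ} \tr_{\b\m\r\th}^* Q$, $N^{\circ} \tr_{\r}^* Q$, and $lg_{\b\m}(N'^{\circ} \tr_{\b\m\r\th}^* Q) \geq 1$. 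The difficulty is that I now have two reductions out of $N'^{\circ}$: one to $P'$ along $\tr_{\r}$, and one to $Q$ along $\tr_{\b\m\r\th}$. I must join them at a common term while tracking the number of $\b\m$-steps.

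This is exactly what Lemma \ref{diag*} provides. Applying it to $N'^{\circ} \tr_{\r}^* P'$ and $N'^{\circ} \tr_{\b\m\r\th}^* Q$, I obtain an $N'' \in {\cal M}'$ with $P' \tr_{\b\m\r\th}^* N''$, $Q \tr_{\r}^* N''$, and the crucial count preservation $lg_{\b\m}(P' \tr_{\b\m\r\th}^* N'') = lg_{\b\m}(N'^{\circ} \tr_{\b\m\r\th}^* Q) \geq 1$. I take $P = N''$. Then $M^{\circ} \tr_{\b\m\r\th}^* P' \tr_{\b\m\r\th}^* N'' = P$ is the required reduction, and $N^{\circ} \tr_{\r}^* Q \tr_{\r}^* N'' = P$ gives the second. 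For the count, concatenating and using additivity of $lg_{\b\m}$ along $\tr_{\b\m\r\th}^*$ reductions gives
$$
lg_{\b\m}(M^{\circ} \tr_{\b\m\r\th}^* P) = lg_{\b\m}(M^{\circ} \tr_{\b\m\r\th}^* P') + lg_{\b\m}(P' \tr_{\b\m\r\th}^* N'') \geq lg(M \tr^* N') + 1 = lg(M \tr^* N),
$$
which closes the induction. The main obstacle is purely the bookkeeping of $\b\m$-step counts through the confluence-style diagram of Lemma \ref{diag*}; the key insight is that Lemma \ref{diag*} was tailored precisely so that pushing a $\tr_{\r}^*$ reduction past a $\tr_{\b\m\r\th}^*$ reduction neither creates nor destroys $\b\m$-steps, so the accumulated lower bound survives each iteration.
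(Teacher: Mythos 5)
Your proof is correct and follows essentially the same route as the paper's: induction on $lg(M \tr^* N)$, splitting off the last step, applying the induction hypothesis and Lemma \ref{simulation2}, and then closing the diagram with Lemma \ref{diag*} while tracking the $\b\m$-step count. The only cosmetic difference is that you spell out the base case and the additivity of $lg_{\b\m}$ under concatenation, which the paper leaves implicit.
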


\begin{proof}
By induction on $lg(M \tr^* N)$. If $M \tr^* L \tr N$, then, by
induction hypothesis, there is $Q\in {\cal M}'$ such that $M^{\circ}
\tr_{\b\m\r\th}^* Q$, $L^{\circ} \tr_{\r}^* Q$ and
$lg_{\b\m}(M^{\circ} \tr_{\b\m\r\th}^* Q) \geq lg(M \tr^* L)$. By
lemma \ref{simulation2}, there is a $R \in {\cal M}'$ such that
$L^{\circ} \tr_{\b\m\r\th}^* R$, $N^{\circ} \tr_{\r}^*R$ and
$lg_{\b\m}(L^{\circ} \tr_{\b\m\r\th}^* R) \geq 1$. Then, by lemma
\ref{diag*}, there is a $P \in {\cal M}'$ such that $Q
\tr_{\b\m\r\th}^* P$, $R \tr_{\r}^* P$ and $lg_{\b\m}(Q
\tr_{\b\m\r\th}^* P) \geq lg_{\b\m}(L^{\circ} \tr_{\b\m\r\th}^* R)
\geq 1$. Thus $M^{\circ} \tr_{\b\m\r\th}^* P$, $N^{\circ} \tr_{\r}^*
P$ and $lg_{\b\m}(M^{\circ} \tr_{\b\m\r\th}^* P) \geq lg(M \tr^* N)$.
\end{proof}

\begin{lemma}\label{fin}
Let $M \in {\cal M}''$ be such that $M^{\circ} \in
SN_{\b\m\r\th}$. Then $M \in SN$.
\end{lemma}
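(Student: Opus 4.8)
The plan is to argue by contradiction, exactly as in the proof of Lemma \ref{sn1->sn2}, using the simulation result of Lemma \ref{simulation3} to bound the length of every reduction starting from $M$ by a quantity depending only on $M^{\circ}$. Since $M^{\circ} \in SN_{\b\m\r\th}$, the number $\eta_{\b\m\r\th}(M^{\circ})$, the length of the longest $\tr_{\b\m\r\th}$-reduction of $M^{\circ}$, is well defined; this is the bound I would use. The point is that the translation never increases (in the sense of the $\b\m$-count) the length of reductions, so an infinite $\l\m^{\f\et\ou}$-reduction would force an arbitrarily long $\l\m$-reduction of $M^{\circ}$.

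First I would suppose, for contradiction, that $M \notin SN$, i.e. that there is an infinite reduction $M = M_0 \tr M_1 \tr M_2 \tr \cdots$ in the $\l\m^{\f\et\ou}$-calculus. Setting $n = \eta_{\b\m\r\th}(M^{\circ}) + 1$ and taking $N = M_n$, I get a reduction $M \tr^* N$ with $lg(M \tr^* N) = n$. Applying Lemma \ref{simulation3} to this reduction yields a term $P \in {\cal M}'$ with $M^{\circ} \tr_{\b\m\r\th}^* P$ and $lg_{\b\m}(M^{\circ} \tr_{\b\m\r\th}^* P) \geq lg(M \tr^* N) = n$.

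The contradiction then comes from the chain $n \leq lg_{\b\m}(M^{\circ} \tr_{\b\m\r\th}^* P) \leq lg(M^{\circ} \tr_{\b\m\r\th}^* P) \leq \eta_{\b\m\r\th}(M^{\circ}) = n - 1$, where the middle inequality holds because the number of $\tr_{\b\m}$-steps cannot exceed the total number of $\tr_{\b\m\r\th}$-steps, and the last inequality is just the definition of $\eta_{\b\m\r\th}$ applied to the reduction $M^{\circ} \tr_{\b\m\r\th}^* P$. Hence no infinite reduction from $M$ exists, so $M \in SN$.

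I do not expect a genuine obstacle here: all the real work has already been done in Lemma \ref{simulation2} (the case analysis deferred to the appendix) and its iteration, Lemma \ref{simulation3}. The only subtlety to watch is that Lemma \ref{simulation3} controls the number of $\b\m$-steps on the $\l\m$-side, not the total number of steps; this is precisely why the bound must be $\eta_{\b\m\r\th}(M^{\circ})$ and why the inequality $lg_{\b\m}(\cdot) \leq lg(\cdot)$ is invoked. Once this is noted, the counting argument closes the proof.
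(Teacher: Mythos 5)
Your proof is correct and takes essentially the same route as the paper: argue by contradiction and use Lemma \ref{simulation3} to turn a sufficiently long prefix of an infinite reduction of $M$ into a $\tr_{\b\m\r\th}$-reduction of $M^{\circ}$ containing too many $\b\m$-steps. The only difference is harmless bookkeeping: the paper takes $n$ to be the maximum number of $\tr_{\b\m}$-steps over the reductions of $M^{\circ}$ (and cites Lemma \ref{SNrth} when choosing $N$), whereas you bound everything by the total length $\eta_{\b\m\r\th}(M^{\circ})$ via the inequality $lg_{\b\m}(\cdot)\leq lg(\cdot)$, which works just as well.
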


\begin{proof}
Since $M^{\circ} \in SN_{\b\m\r\th}$, let $n$ be the maximum of
$\tr_{\b\m}$ steps in the reductions of  $M^{\circ}$. If $M \not
\in SN$,  by lemma \ref{SNrth}, let $N$ be such that $M \tr^* N$
and $lg_{\b\m}(M \tr^* N) \geq n+1$.   By lemma \ref{simulation3},
there is $P$ such that $M^{\circ} \tr_{\b\m\r\th}^* P$ and
$lg_{\b\m}(M^{\circ} \tr_{\b\m\r\th}^* P) \geq lg_{\b\m}(M \tr^*
N)\geq n+1$.  Contradiction.
\end{proof}

\begin{theorem}\label{SN2}
Every typed $\l\m^{\f\et\ou}$-term is strongly normalizable.
\end{theorem}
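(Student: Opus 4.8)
The plan is to assemble the final result from the translation machinery developed in this section together with the strong normalization of the simply typed $\l\m$-calculus. Since the statement concerns an arbitrary \emph{typed} term $M$, I would start from a typing derivation $\G \v_{{\cal S}^{\f\et\ou}} M : A$ and push it through the coding. By lemma \ref{coding2}, the translated term $M^{\circ}$ is itself a typed $\l\m$-term, namely $\G^{\circ} \v_{{\cal S}^{\m}} M^{\circ} : A^{\circ}$. The purpose of this step is only to certify that $M^{\circ}$ lies in the domain to which the $\l\m$-strong-normalization theorem applies; no reduction behaviour is invoked yet.

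Next I would apply theorem \ref{SN_M}, which gives $M^{\circ} \in SN_{\b\m}$. However, the simulation of the $\l\m^{\f\et\ou}$-reductions (lemma \ref{simulation2}) produces not only $\b$- and $\m$-steps but also the auxiliary $\r$- and $\th$-steps, so this must be upgraded to strong normalization for the full reduction $\tr_{\b\m\r\th}$. This is exactly what theorem \ref{retardemant} provides: from $M^{\circ} \in SN_{\b\m}$ it follows that $M^{\circ} \in SN_{\b\m\r\th}$, the extra rules being harmless because they can be postponed (theorem \ref{ch1:pprt}) and are themselves terminating (lemma \ref{SNrth}). Finally, with $M^{\circ} \in SN_{\b\m\r\th}$ in hand, lemma \ref{fin} transfers strong normalization back across the translation, yielding $M \in SN$. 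Thus the whole argument is the short chain lemma \ref{coding2} $\to$ theorem \ref{SN_M} $\to$ theorem \ref{retardemant} $\to$ lemma \ref{fin}.

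The genuine work does not live in this final assembly but in the lemmas it cites. I expect the main obstacle to be concentrated in lemma \ref{fin}, whose proof rests on the quantitative simulation of lemma \ref{simulation3}: every reduction step of $M$ must be matched by at least one $\b\m$-step of $M^{\circ}$ (up to $\r$-reductions), so that an infinite reduction of $M$ would force infinitely many $\b\m$-steps out of the $SN_{\b\m\r\th}$ term $M^{\circ}$, a contradiction with the fact that the number of $\b\m$-steps in any reduction of $M^{\circ}$ is bounded. The delicate ingredient underpinning this counting is the commutation/postponement analysis (lemma \ref{diag*}, together with the per-rule case analysis behind lemma \ref{simulation2}), which must guarantee that the $\r$-steps introduced by the coding never absorb or cancel the essential $\b\m$-steps that witness a reduction of $M$. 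Relative to that body of work, the proof of theorem \ref{SN2} itself is immediate, and I would present it simply as the composition of the four results above.
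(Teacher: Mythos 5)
Your proof is correct and follows exactly the paper's own argument: the chain lemma \ref{coding2} $\to$ theorem \ref{SN_M} $\to$ theorem \ref{retardemant} $\to$ lemma \ref{fin} is precisely the composition the authors cite in their proof of theorem \ref{SN2}. Your remarks on where the real work lies (the quantitative simulation behind lemma \ref{fin}) also match the structure of the paper.
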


\begin{proof}
A consequence of theorems \ref{SN_M}, \ref{retardemant} and lemmas
\ref{fin}, \ref{coding2}.
\end{proof}

\section{Recursive equations on types}\label{7}

We study here systems where equations on types are allowed. These
types are usually called recursive types. The subject reduction and
the decidability of type assignment are preserved but the strong
normalization may be lost. For example, with the equation $X = X \f
T$, the term $(\D \; \D)$ where $\D=\l x.(x \, x)$ is
typable but is not strongly normalizing. With the equation $X=X\f X$,
every term can be typed. By making some natural assumptions on the
recursive equations the strong normalization can be preserved. The
simplest condition is to accept the equation $X = F$ (where $F$ is a
type containing the variable $X$) only when the variable $X$ is
positive in $F$. For a set $\{X_i = F_i \;/ \; i \in I\}$ of mutually
recursive equations, Mendler \cite{Mend1} has given a very simple and
natural condition that ensures the strong normalization of the system.
He also showed that the given condition is necessary to have the
strong normalization.

Mendler's result concerns the implicative fragment of
intuitionistic logic. We extend here his result to full classical
logic. We now assume  ${\cal A}$ contains a specified subset
${\cal X}=\{X_i \; / \; i \in I \}$.

\begin{definition}\label{positive}
Let $X \in {\cal X}$. We define the subsets ${\cal P}^+(X)$ and
${\cal P}^-(X)$ of ${\cal T}$ (resp. ${\cal T}'$ ) as follows.
\begin{itemize}
\item $X \in {\cal P}^+(X)$
\item If $A \in ({\cal X}- \{X\})\cup \cal{A}$, then $A \in {\cal
P}^+(X) \cap {\cal P}^-(X)$.
\item If $A \in {\cal P}^-(X)$ and $B \in {\cal P}^+(X)$, then $A \f B
\in {\cal P}^+(X)$ and $B \f A \in {\cal P}^-(X)$.
\item If $A,B \in {\cal P}^+(X)$, then $A \et B, B \ou A \in {\cal
P}^+(X)$.
\item If $A,B \in {\cal P}^-(X)$, then $A \et B, B \ou A \in {\cal
P}^-(X)$.
\end{itemize}
\end{definition}

\begin{definition}
\begin{itemize}
\item Let ${\cal F}=\{F_i \; / \; i \in I \}$ be a set of types in
$\cal{T}$ (resp. in $\cal{T}'$). The congruence $\approx$ generated by
${\cal F}$ in $\cal{T}$ (resp. in $\cal{T}'$) is the least congruence
such that $X_i \approx F_i$ for each $i \in I$.
  \item We say that $\approx$ is good if, for each $X \in {\cal X}$,
if $X \approx A$, then $A \in {\cal P}^+(X)$.
\end{itemize}
\end{definition}

\subsection{Strong normalization of ${\cal S}^{\m}_{\approx}$}

Let $\approx$ be the congruence  generated by a set ${\cal F}$ of types
of $\cal{T}$.

\begin{theorem}[Mendler]\label{mendler}
If $\approx$ is good, then the system ${\cal S}_{\approx}$ is
strongly normalizing.
\end{theorem}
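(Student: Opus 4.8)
The plan is to reduce strong normalization of ${\cal S}_{\approx}$ to that of the ordinary simply typed $\l$-calculus ${\cal S}$ (theorem \ref{sn_rd}), staying inside the translation philosophy of the paper. Since the $\approx$-rule leaves the term untouched and the reduction relation is plain $\b$-reduction, a term typable in ${\cal S}_{\approx}$ is just an ordinary $\l$-term whose only extra feature is that its typing derivation may fold and unfold the recursive equations $X_i \approx F_i$. I would make these folds and unfoldings explicit as coercion $\l$-terms inserted at each use of the $\approx$-rule, and replace each recursive atom $X_i$ by a finite unfolding that lives in the ordinary type system ${\cal T}$. The resulting term would be typable in ${\cal S}$ and would simulate the reductions of the original (each $\b$-step becoming at least one $\b$-step, exactly as in lemmas \ref{simulation1}--\ref{simulation3}), so that its strong normalization transfers back to $M$.

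The coercions are the place where the \emph{good} condition (definition \ref{positive}) enters. Because each $X_i$ occurs only positively in anything it is congruent to, the type operator $A \mapsto F_i$ is monotone, and from a coercion term $c : A \f B$ one can build, by induction on the positive/negative structure of a type, a coercion $F_i[X:=A] \f F_i[X:=B]$ --- precisely the kind of inductive construction already used for the terms $T_A$ in section \ref{4}. Introducing stage-indexed approximations, with $X_i^{0}$ a fixed base type and $X_i^{k+1}$ obtained from $F_i$ by replacing each recursive atom $X_j$ by $X_j^{k}$, one then obtains from a base coercion a chain of coercions $X_i^{k+1} \f X_i^{k}$. Each fold/unfold in the derivation is translated as an application of one of these coercion terms, so that the simulation property continues to hold with the usual $\b$-rule.

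The main obstacle is choosing the unfolding depth uniformly for the \emph{whole} reduction. For a single step any sufficiently deep unfolding works, but a priori the number of fold/unfold alternations met along an infinite reduction is unbounded, and letting the depth grow with the reduction length would make the resulting bound circular (one would be bounding the length by a quantity that itself depends on the length). The heart of the matter --- and the precise point where positivity is indispensable --- is to show that every coercion strictly decreases a stage index and that $\b$-reduction never forces that index to increase, so that a depth read off from the typing derivation of $M$ alone already suffices for every reduct of $M$. Once this well-foundedness is established, the rest is routine: combining it with the substitution argument in the style of lemma \ref{b} and with theorem \ref{sn_rd} gives $M \in SN_\beta$. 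An alternative, more self-contained route would avoid the explicit translation and instead re-run the induction of lemma \ref{b}, replacing the measure $type(N)$ by a well-founded complexity on ${\cal T}/\!\approx$ supplied by positivity; there the same difficulty resurfaces as the need to make that measure decrease when the recursive variable sits in a positive position, which is exactly what the good condition is designed to control.
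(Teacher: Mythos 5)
The first thing to note is that the paper does \emph{not} prove this statement: Theorem \ref{mendler} is quoted as Mendler's result, with a pointer to \cite{Mend1} for the original (semantic) proof and to \cite{DaNo3} for an arithmetical one. So your sketch has to stand on its own, and it does not. You correctly isolate the crucial point --- that the unfolding depth (your stage index) must be bounded uniformly over all reducts of $M$, i.e.\ that coercions decrease a well-founded measure which $\b$-reduction can never force to increase --- and then you assume it (``once this well-foundedness is established, the rest is routine''). That deferred step is not a remaining technicality; it essentially \emph{is} Mendler's theorem. The half of your argument that you do carry out (positivity gives monotone type operators, hence coercion terms $F[X:=A] \f F[X:=B]$ from $c : A \f B$, in the style of the terms $T_A$ of section \ref{4}) uses goodness only through monotonicity and would be equally available for harmless-looking bad systems; all of the difficulty, and the entire role of the positivity condition that Mendler showed to be necessary, is concentrated in the step you leave open.

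There are moreover concrete reasons to doubt that the deferred step can be carried out in the form you state it. Your stage-indexed approximations need an orientation: $X^{k+1} = F[X^{k}]$ yields coercions $X^{k+1} \f X^{k}$ only by induction from a base coercion $X^{1} = F[X^{0}] \f X^{0}$, and in the simply typed $\l$-calculus of section \ref{2} there is no choice of $X^{0}$ for which such a closed term exists in general (there is no terminal type, and $\bot$ is just an ordinary atom). Worse, for equi-recursive equations both fold and unfold are freely available in derivations --- consider $X \approx (X \f A) \f A$, which is good since $X$ sits in doubly negative, hence positive, position --- so coercions are needed in both directions and no index is visibly decreased by one direction while being preserved by the other. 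This is exactly why the known proofs use different machinery: Mendler interprets each $X_i$ as a Knaster--Tarski fixed point of the monotone operator induced by $F_i$ on reducibility candidates, and \cite{DaNo3} re-runs a direct induction in the style of lemma \ref{b}, where the design of a well-founded measure compatible with ${\cal T}/\!\approx$ (your ``alternative route'') is the substance of the proof rather than a detail. As it stands, your proposal is an honest reduction of the theorem to its hardest lemma, not a proof of it.
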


\begin{proof}
See \cite{Mend1} for the original proof and \cite{DaNo3} for an
arithmetical one.
\end{proof}

\begin{lemma}\label{coding3}
If $\G \v_{{\cal S}^{\m}_{\approx}} M : A$, then $\G^{\diamond}
\v_{{\cal S}^c_{\approx}} M^{\diamond} : A$.
\end{lemma}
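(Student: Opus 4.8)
The plan is to proceed exactly as in the proof of Lemma \ref{coding1}, by induction on the derivation of $\G \v_{{\cal S}^{\m}_{\approx}} M : A$. The systems ${\cal S}^{\m}_{\approx}$ and ${\cal S}^c_{\approx}$ are obtained from ${\cal S}^{\m}$ and ${\cal S}^c$ by adding the single typing rule $\approx$, so every case in which the last rule is an $ax$, $\f_i$, $\f_e$, $\bot_i$ or $\bot_e$ rule is treated verbatim as in Lemma \ref{coding1}; in particular the case $M = \m \a. M'$ again uses Lemma \ref{tran}, i.e. that $\v_{{\cal S}^c} T_A : \neg \neg A \f A$. Since the derivation of $T_A$ never appeals to the $\approx$ rule and ${\cal S}^c$ is a subsystem of ${\cal S}^c_{\approx}$, we have a fortiori $\v_{{\cal S}^c_{\approx}} T_A : \neg \neg A \f A$, and these cases go through unchanged.

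The only genuinely new case is when the last rule is the congruence rule, i.e. $\G \v_{{\cal S}^{\m}_{\approx}} M : B$ is inferred from $\G \v_{{\cal S}^{\m}_{\approx}} M : A$ together with $A \approx B$. The key observation is that the translation $M^{\diamond}$ leaves the type of a term unchanged (this is the very point of the $T_A$-decomposition), so the congruence $\approx$ used on the source side and on the target side is literally the same congruence on ${\cal T}$. By the induction hypothesis $\G^{\diamond} \v_{{\cal S}^c_{\approx}} M^{\diamond} : A$, and since $A \approx B$ still holds we may apply the $\approx$ rule of ${\cal S}^c_{\approx}$ to conclude $\G^{\diamond} \v_{{\cal S}^c_{\approx}} M^{\diamond} : B$. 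The subject is not modified by the congruence rule, so $M^{\diamond}$ is the same on both sides.

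I do not expect a genuine obstacle, since the type translation attached to $\diamond$ is the identity on ${\cal T}$; the only step requiring a moment's care is confirming that the auxiliary terms $T_A$ retain their typing in the enriched system ${\cal S}^c_{\approx}$, which follows at once from Lemma \ref{tran} by monotonicity of derivability under the addition of the $\approx$ rule. By contrast, the harder work will appear in the forthcoming ${\cal S}^{\f\et\ou}_{\approx}$ case, where one must check that the type translation $^{\circ}$ sends $\approx$-equivalent types of ${\cal T}'$ to $\approx$-equivalent types of ${\cal T}$; for the $\l\m$-calculus treated here that issue is simply absent.
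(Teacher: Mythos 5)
Your proposal is correct and matches the paper's own argument, which is simply an induction on the typing derivation $\G \v_{{\cal S}^{\m}_{\approx}} M : A$ reusing the cases of Lemma \ref{coding1}; your treatment of the $\approx$-rule case (noting that $\diamond$ leaves types unchanged, so the same congruence applies on the target side) and of the persistence of Lemma \ref{tran} in ${\cal S}^c_{\approx}$ is exactly the content the paper leaves implicit.
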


\begin{proof}
By induction on the typing $\G \v_{{\cal S}^{\m}_{\approx}} M : A$.
\end{proof}

\begin{theorem}\label{mendler_mu}
If $\approx$ is good, then the system ${\cal S}^{\m}_{\approx}$ is
strongly normalizing.
\end{theorem}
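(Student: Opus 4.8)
The plan is to follow exactly the pattern that established Theorem \ref{SN_M}, replacing the strong-normalization input Theorem \ref{sn_rd} by Mendler's Theorem \ref{mendler} and the typing lemma \ref{coding1} by its equational counterpart, Lemma \ref{coding3}. The key observation that makes this transfer painless is that the translation $\diamond$ from the $\l\m$-calculus to the $\l$-calculus does not act on types (the types of the $\l\m$-calculus already live in ${\cal T}$ and are left unchanged by $\diamond$); hence the congruence $\approx$ on ${\cal T}$ is carried over verbatim and, in particular, stays good.

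First I would take a typed term $\G \v_{{\cal S}^{\m}_{\approx}} M : A$ and translate it. By Lemma \ref{coding3} this gives $\G^{\diamond} \v_{{\cal S}^c_{\approx}} M^{\diamond} : A$, the only additional declarations being the constants $c_X : \neg \neg X \f X$. Since each such type lies in ${\cal T}$ and each $c_X$ occurs merely as a free variable, I would absorb the $c_X$ into the $\l$-context and read $M^{\diamond}$ as an ordinary term typable in ${\cal S}_{\approx}$; no new equation is introduced, so $\approx$ remains the good congruence generated by ${\cal F}$. Then, because $\approx$ is good, Mendler's Theorem \ref{mendler} yields that ${\cal S}_{\approx}$ is strongly normalizing, whence $M^{\diamond} \in SN_{\b}$. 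Finally, Lemma \ref{sn1->sn2}, which is purely syntactic and makes no use of typing, gives $M \in SN_{\b\m}$, as desired.

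The argument is genuinely routine once these ingredients are assembled, and I expect no serious obstacle. The only point deserving a moment's care is precisely the remark that $\diamond$ leaves types fixed, so that goodness of $\approx$ needs no re-verification, together with the fact that the auxiliary constants $c_X$ have types in ${\cal T}$ and may be treated as free variables, keeping $M^{\diamond}$ squarely within the reach of Theorem \ref{mendler}.
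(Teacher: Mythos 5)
Your proposal is correct and follows exactly the paper's own argument: translate $M$ via $\diamond$, apply Lemma \ref{coding3} to get typability of $M^{\diamond}$ in ${\cal S}^c_{\approx}$ (which by definition is just ${\cal S}_{\approx}$ with the constants $c_X$ declared in the context, as you note), conclude $M^{\diamond} \in SN_{\b}$ by Mendler's Theorem \ref{mendler}, and transfer back with Lemma \ref{sn1->sn2}. Your added remarks about the translation leaving types fixed and the $c_X$ being absorbable as free variables are sound and merely make explicit what the paper leaves implicit.
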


\begin{proof}
Let $M \in {\cal M}'$ be a term typable in ${\cal
S}^{\m}_{\approx}$. By lemma \ref{sn1->sn2},  it is enough to show
that $M^{\diamond} \in SN_{\b}$. This follows immediately from
theorem \ref{mendler} and lemma \ref{coding3}.  Note that, in
\cite{DaNo3}, we also had given a direct proof of this result.
\end{proof}

\subsection{Strong normalization of ${\cal
    S}^{\f\et\ou}_{\approx}$}

Let ${\cal F}=\{F_i \; / \; i \in I \}$ be a set of types in ${\cal T}'$
and let ${\cal F}^{\circ}=\{F_i^{\circ} \; / \; i \in I \}$ be its
translation in ${\cal T}$. Let $\approx$ be the congruence
generated by ${\cal F}$  in ${\cal T}'$ and let $\approx^{\circ}$ be the
congruence generated by ${\cal F}^{\circ}$ in ${\cal T}$.

\begin{lemma}\label{good}
\begin{enumerate}
\item If $\approx$ is good, then
so is  $\approx^{\circ}$.
\item If $A \approx B$, then $A^{\circ} \approx^{\circ} B^{\circ}$.
\end{enumerate}
\end{lemma}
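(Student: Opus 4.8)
\begin{enumerate}
\item If $\approx$ is good, then so is $\approx^{\circ}$.
\item If $A \approx B$, then $A^{\circ} \approx^{\circ} B^{\circ}$.
\end{enumerate}

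Let me sketch how I would prove this.

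===

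The plan is to handle the two points separately, since the second is the technical heart and the first will follow from it together with a compatibility statement between the polarity predicates $\mathcal{P}^{\pm}$ and the translation $(-)^{\circ}$.

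For point (2), I would argue that $(-)^{\circ}$ is a well-defined map on types that respects the generators of the two congruences. The congruence $\approx^{\circ}$ on $\mathcal{T}$ is, by definition, the least congruence with $X_i \approx^{\circ} F_i^{\circ}$ for all $i \in I$. Since the variables $X_i$ lie in $\mathcal{A}$, we have $X_i^{\circ} = X_i$, so each generating pair $X_i \approx F_i$ of $\approx$ is sent by $(-)^{\circ}$ to the pair $X_i^{\circ} = X_i \approx^{\circ} F_i^{\circ}$, which is a generator of $\approx^{\circ}$. It then suffices to observe that $(-)^{\circ}$ is compatible with the type constructors: from the defining clauses, $\{A_1 \f A_2\}^{\circ} = A_1^{\circ} \f A_2^{\circ}$ directly, and for $\et$ and $\ou$ the translation is a fixed context applied to $A_1^{\circ}, A_2^{\circ}$, so $A \approx B$ implies $A^{\circ} \approx^{\circ} B^{\circ}$ in each constructor case. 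A straightforward induction on the derivation that $A \approx B$ (reflexivity, symmetry, transitivity, and the congruence-closure rules for $\f, \et, \ou$), using that $\approx^{\circ}$ is itself a congruence on $\mathcal{T}$, then gives the result.

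For point (1), I must show that goodness is preserved. Suppose $\approx$ is good and fix $X \in \mathcal{X}$ with $X \approx^{\circ} C$ for some $C \in \mathcal{T}$; I must check $C \in \mathcal{P}^+(X)$ (the polarity predicate now taken in $\mathcal{T}$). The key auxiliary fact I would isolate and prove by induction on $A \in \mathcal{T}'$ is that the translation respects polarities: if $A \in \mathcal{P}^+(X)$ then $A^{\circ} \in \mathcal{P}^+(X)$, and if $A \in \mathcal{P}^-(X)$ then $A^{\circ} \in \mathcal{P}^-(X)$. One checks this clause by clause against Definition \ref{positive}, using that $\{A_1 \et A_2\}^{\circ} = \neg(A_1^{\circ} \f (A_2^{\circ} \f \bot))$ places both $A_1^{\circ}$ and $A_2^{\circ}$ in positive position (each is to the left of an even number of arrows), and similarly for $\ou$; the negative clauses are dual. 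Granting this, since $\approx$ good gives, for the original $X$, that any $A$ with $X \approx A$ lies in $\mathcal{P}^+(X)$, and since $X \approx^{\circ} C$ traces back through point (2) and the structure of the congruences to such an $A$ with $A^{\circ} = C$, the polarity-preservation fact yields $C = A^{\circ} \in \mathcal{P}^+(X)$.

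The main obstacle I anticipate is the polarity-preservation induction in point (1): one must verify carefully that the fixed negation-heavy contexts used to code $\et$ and $\ou$ do not flip the polarity of the subformulas, i.e.\ that the two arguments genuinely sit in positive (resp.\ negative) positions of the coded type. This is the step where an error could creep in, because $\neg$ abbreviates $(-) \f \bot$ and one must count arrow-nestings correctly. The congruence-closure induction in point (2) is routine, and relating $X \approx^{\circ} C$ back to a generator-level witness in point (1) is bookkeeping once the compatibility of $(-)^{\circ}$ with the constructors is in hand.
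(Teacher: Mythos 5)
Your proof of point (2) is correct and is essentially the paper's: induction on the derivation of $A \approx B$, using that each generator $X_i \approx F_i$ is sent to the generator $X_i \approx^{\circ} F_i^{\circ}$ and that $\approx^{\circ}$ is a congruence compatible with the (translated) constructors. Likewise, your polarity-preservation lemma (if $A \in {\cal P}^{\pm}(X)$ then $A^{\circ} \in {\cal P}^{\pm}(X)$) is exactly the observation that constitutes the paper's entire proof of point (1), and your verification of the arrow-counting is sound. The genuine gap is the step you dismiss as bookkeeping: you claim that $X \approx^{\circ} C$ ``traces back'' to some $A$ with $X \approx A$ and $A^{\circ} = C$. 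Point (2) only gives the inclusion of the translated $\approx$-class of $X$ into its $\approx^{\circ}$-class; the converse inclusion, which is what your final step uses, is false in general.

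Here is a counterexample to that claim. Take ${\cal X} = \{X_1, X_2\}$, atomic constants $Y, Z, U$, and the equations $X_1 \approx F_1$, $X_2 \approx F_2$ with $F_1 = Y \et Z$ and $F_2 = \neg(Y \f \neg Z) \f U$. This $\approx$ is good: since no variable of ${\cal X}$ occurs in $F_1$ or $F_2$, the $\approx$-class of $X_1$ is $\{X_1, \, Y \et Z\}$ and that of $X_2$ is $\{X_2, \, F_2\}$, and goodness holds trivially. Now $F_1^{\circ} = \{Y \et Z\}^{\circ} = \neg(Y \f \neg Z)$, and $F_2$ contains no $\et$ or $\ou$, so $F_2^{\circ} = F_2 = F_1^{\circ} \f U$. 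Hence in ${\cal T}$ we have $X_2 \approx^{\circ} F_1^{\circ} \f U \approx^{\circ} X_1 \f U$, the second step rewriting the subterm $F_1^{\circ}$ to $X_1$ under the congruence. But $X_1 \f U$ is not $A^{\circ}$ for any $A$ in the $\approx$-class of $X_2$: the only candidates are $X_2$ and $F_2$, and $X_1 \f U$ is neither. The phenomenon is that the coding of $\et$ and $\ou$ can produce subterms that coincide with some $F_j^{\circ}$ without arising from anything $\approx$-related to $X_j$, so the $\approx^{\circ}$-class of a variable is in general strictly larger than the image of its $\approx$-class. (In this example goodness of $\approx^{\circ}$ does survive, since $X_2$ does not occur in $X_1 \f U$; it is only your route to it that breaks.) A correct completion must argue about $\approx^{\circ}$ directly in ${\cal T}$ --- for instance by showing that goodness of a generated congruence depends only on the signed occurrences of the variables $X_j$ in the right-hand sides $F_i$, which is precisely the data your polarity lemma (and the paper's one-line remark) shows the translation preserves --- rather than by lifting $\approx^{\circ}$-equivalences back to ${\cal T}'$.
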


\begin{proof}
\begin{enumerate}
\item Just note that $A_1^{\circ}$ and $A_2^{\circ}$ are in positive
position in $\{A_1 \et A_2\}^{\circ}$ and $\{A_1 \ou A_2\}^{\circ}$.
\item By induction on the proof of $A \approx B$.
\end{enumerate}
\end{proof}

\begin{lemma}\label{coding4}
If $\G \v_{{\cal S}^{\f\et\ou}_{\approx}} M : A$, then $\G^{\circ}
\v_{{\cal S}^{\m}_{{\approx}^{\circ}}} M^{\circ} : A^{\circ}$.
\end{lemma}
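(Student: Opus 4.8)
The plan is to prove this exactly as lemma \ref{coding2} was proved, namely by induction on the derivation of $\G \v_{{\cal S}^{\f\et\ou}_{\approx}} M : A$. The point is that neither the term translation $M \mapsto M^{\circ}$ nor the type translation $A \mapsto A^{\circ}$ changes; the only difference between ${\cal S}^{\f\et\ou}_{\approx}$ and ${\cal S}^{\f\et\ou}$ is the presence of the congruence rule, and likewise for ${\cal S}^{\m}_{\approx^{\circ}}$ versus ${\cal S}^{\m}$. Since ${\cal S}^{\m}_{\approx^{\circ}}$ contains every typing rule of ${\cal S}^{\m}$, each structural case can be copied verbatim from lemma \ref{coding2}.

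Concretely, for every rule of ${\cal S}^{\f\et\ou}$ that is not the congruence rule — the axiom, $\f_i$, $\f_e$, $\bot_i$, $\bot_e$, $\et_i$, $\et_e$, $\ou_i$, $\ou_e$ — the verification that $M^{\circ}$ receives the type $A^{\circ}$ in ${\cal S}^{\m}_{\approx^{\circ}}$ is identical to the corresponding case of lemma \ref{coding2}. In particular, the declaration $\vp : \neg\bot$ in $\G^{\circ}$ is used precisely as there to type the translations of $M\,\pi_i$ and $M\;[x_1.N_1,x_2.N_2]$, and the induction hypothesis supplies the translated types of the immediate subterms. Nothing new happens in these cases because the congruence rule of the target system is simply never invoked in them.

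The only genuinely new case is the congruence rule itself. Suppose the last step derives $\G \v_{{\cal S}^{\f\et\ou}_{\approx}} M : B$ from a premise $\G \v_{{\cal S}^{\f\et\ou}_{\approx}} M : A'$ together with $A' \approx B$. By the induction hypothesis applied to the premise, $\G^{\circ} \v_{{\cal S}^{\m}_{\approx^{\circ}}} M^{\circ} : {A'}^{\circ}$. By part (2) of lemma \ref{good}, $A' \approx B$ gives ${A'}^{\circ} \approx^{\circ} B^{\circ}$, and since $\approx^{\circ}$ is by definition a congruence on ${\cal T}$ the congruence rule of ${\cal S}^{\m}_{\approx^{\circ}}$ is available; applying it yields
$$\F{\G^{\circ} \v_{{\cal S}^{\m}_{\approx^{\circ}}} M^{\circ} : {A'}^{\circ} \quad {A'}^{\circ} \approx^{\circ} B^{\circ}}{\G^{\circ} \v_{{\cal S}^{\m}_{\approx^{\circ}}} M^{\circ} : B^{\circ}}\,\approx^{\circ},$$
which is exactly the required conclusion. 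There is essentially no obstacle: all the real content has been relegated to lemma \ref{good}(2), which guarantees that the type translation respects the two congruences, and to the structural bookkeeping already carried out in lemma \ref{coding2}. The induction therefore closes immediately.
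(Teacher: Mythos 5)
Your proof is correct and matches the paper's approach: the paper's own proof is exactly this induction on the derivation of $\G \v_{{\cal S}^{\f\et\ou}_{\approx}} M : A$, with the structural cases handled as in lemma \ref{coding2} and the congruence rule discharged via lemma \ref{good}(2). Your write-up simply makes explicit the case analysis that the paper leaves implicit.
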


\begin{proof}
By induction on a derivation of $\G \v_{{\cal S}^{\f\et\ou}_{\approx}}
M : A$.
\end{proof}

\begin{theorem}\label{mend_3}
If $\approx$ is good, then the system ${\cal
S}^{\f\et\ou}_{\approx}$ is strongly normalizing.
\end{theorem}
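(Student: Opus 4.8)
The plan is to mimic exactly the proof of Theorem \ref{SN2}, replacing the unconditional strong normalization of the target $\l\m$-calculus by its Mendler-style counterpart. Let $M \in {\cal M}''$ be typable in ${\cal S}^{\f\et\ou}_{\approx}$, say $\G \v_{{\cal S}^{\f\et\ou}_{\approx}} M : A$. First I would push the term through the translation: by Lemma \ref{coding4} we obtain $\G^{\circ} \v_{{\cal S}^{\m}_{\approx^{\circ}}} M^{\circ} : A^{\circ}$, so that $M^{\circ}$ is a $\l\m$-term typable in the system with the translated congruence $\approx^{\circ}$.

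The next step is where the only genuinely new ingredient enters. I would invoke Lemma \ref{good}(1) to conclude that, since $\approx$ is good, the translated congruence $\approx^{\circ}$ is also good. This is exactly what licenses the use of the recursive-equation version of the normalization theorem for $\l\m$: applying Theorem \ref{mendler_mu} to $\approx^{\circ}$, the term $M^{\circ}$ lies in $SN_{\b\m}$.

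From here the argument is identical to the one for Theorem \ref{SN2} and relies only on facts about the reduction relation (no typing and no particular congruence). Since $M^{\circ} \in SN_{\b\m}$, Theorem \ref{retardemant} upgrades this to $M^{\circ} \in SN_{\b\m\r\th}$, and then Lemma \ref{fin}, whose proof rests purely on the simulation Lemmas \ref{simulation2}--\ref{simulation3}, yields $M \in SN$. As $M$ was an arbitrary typable term, ${\cal S}^{\f\et\ou}_{\approx}$ is strongly normalizing.

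I do not expect any real obstacle, since all the work has been front-loaded into the earlier lemmas. The one point that must be checked carefully --- and is the conceptual heart of the extension --- is precisely Lemma \ref{good}(1), namely that the coding of $\et$ and $\ou$ keeps the recursion variables in positive position, so that goodness is transported along $(\cdot)^{\circ}$. Everything downstream (the simulation, the postponement of $\tr_{\r\th}$, and the length bookkeeping of Lemma \ref{diag*}) is insensitive to the presence of type equations, because those results speak only about untyped reductions; hence they require no modification whatsoever.
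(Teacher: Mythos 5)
Your proof is correct and follows exactly the paper's own argument: Lemma \ref{coding4} to translate the typing, Lemma \ref{good} to transport goodness to $\approx^{\circ}$, Theorems \ref{mendler_mu} and \ref{retardemant} to get $M^{\circ} \in SN_{\b\m\r\th}$, and Lemma \ref{fin} to conclude $M \in SN$. Your added remark that the downstream simulation lemmas are insensitive to type equations is accurate and is precisely why the paper's reuse of them needs no modification.
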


\begin{proof}
Let $M \in {\cal M}''$ be a term typable in ${\cal
S}^{\f\et\ou}_{\approx}$, then, by lemma \ref{coding4}, $M^{\circ}$ is
typable in ${\cal S}^{\m}_{{\approx}^{\circ}}$. Since, by lemma
\ref{good}, $\approx^{\circ}$ is good, then, by theorems
\ref{mendler_mu} and \ref{retardemant}, $M^{\circ} \in
SN_{\b\m\r\th}$, thus by lemma \ref{fin}, $M \in SN$.\\
\end{proof}

\noindent {\bf Remark}

Note that, in definition \ref{positive}, it was necessary to
define, for $X$ to be positive in a conjunction and a disjunction,
as being positive in both formulas since, otherwise, the previous
theorem will not be true as the following examples shows. Let
$A,B$ be any types. Note that, in particular,  $X$ may occur in
$A$ and $B$ and thus the negative occurrence  of $X$ in $X \f B$
is enough to get a non normalizing term.

\begin{itemize}
\item Let $F = A \et (X \f B)$ and $\approx$ be the congruence generated by
$X \approx F$. Let $M = \l x.((x \, \pi_2) \, x)$. Then $y : A \v_{{\cal
S}^{\f\et\ou}_{{\approx}}} (M \, \<y,M\>) : B$ and $(M \, \<y,M\>)
\not\in SN$ since it reduces to itself.

\item Let $G = A \ou (X \f B)$ and $\approx$ be the congruence generated by
$X \approx G$. Let $N = \l x (x \, [y.y,z.(z \, \o_2z)])$. Then
$\v_{{\cal S}^{\f\et\ou}_{{\approx}}} (N \, \o_2N) : B$ and $(N \,
\o_2N) \not\in SN$ since it reduces to itself.
\end{itemize}

\section{Appendix}\label{app}

\noindent {\bf Lemma \ref{simulation2}}
{\it Let $M \in {\cal M}''$. If $M \tr N$, then there is $P \in {\cal M}'$
such that $M^{\circ} \tr_{\b\m\r\th}^* P$, $N^{\circ} \tr_{\r}^*P$ and
$lg_{\b\m}(M^{\circ} \tr_{\b\m\r\th}^* P) \geq 1$}.

\begin{proof} We consider only the case of redexes.

\begin{itemize}
\item If $(\l x. M \; N) \tr M[x:=N]$, then

$\{(\l x. M \; N)\}^{\circ} = (\l x. M^{\circ} \; N^{\circ})
\tr_{\b}M^{\circ}[x:=N^{\circ}] = \{M[x:=N]\}^{\circ}$.

\item If $(\< M_1,M_2 \> \; \pi_i) \tr M_i$, then

$\{(\< M_1,M_2 \> \; \pi_i)\}^{\circ} =\m \a. (\vp \; (\l x. (x \;
M_1^{\circ} \; M_2^{\circ}) \; \l x_1.\l x_2.
 \m \g. (\a \;
 x_i)))$

 $ \tr_{\b}^+ \ \mu \a. (\vp \; \m \g. (\a \; M_i^{\circ})) \tr_{\r}\ \mu \a. (\a \; M_i^{\circ}) \tr_{\th} M_i^{\circ}$.

\item If $(\o_i M\; [x_1. N_1 ,x_2. N_2]) \tr N_i[x_i:=M]$, then

$\{(\o_i M\; [x_1. N_1 ,x_2. N_2])\}^{\circ} =$

$\m \a. (\vp \; (\l x_1. \l x_2. (x_i \; M^{\circ}) \; \l x_1. \m\g.(\a
\; N_1^{\circ})
  \; \l x_2. \m\g.(\a \; N_2^{\circ}))) $

$\tr_{\b}^+ \ \mu \a. (\vp \; \m\g.(\a \;
N_i^{\circ}[x_i:=M^{\circ}])) \tr_{\r}\mu \a. (\a \;
N_i^{\circ}[x_i:=M^{\circ}]) \tr_{\th} N_i^{\circ}[x_i:=M^{\circ}]
 $

$= \{N_i[x_i:=M]\}^{\circ}$.

\item If $(M \; [x_1.N_1 , x_2.N_2] \; N)
\tr (M \; [x_1.(N_1 \; N) , x_2.(N_2 \; N)])$, then

$\{(M \; [x_1.N_1 , x_2.N_2] \; N)\}^{\circ} =$

$(\m \a. (\vp \; (M^{\circ} \; \l x_1. \m\g.(\a \;
  N_1^{\circ}) \; \l x_2. \m\g.(\a \; N_2^{\circ}))) \; N^{\circ}) $

$\tr_{\m} \ \m \a. (\vp \; (M^{\circ} \; \l x_1. \m\g.(\a \;
  (N_1^{\circ} \; N^{\circ})) \; \l x_2. \m\g.(\a \; (N_2^{\circ} \;
  N^{\circ}))))$

$ = \m \a. (\vp \; (M^{\circ} \; \l x_1. \m\g.(\a \;
  \{(N_1 \; N)\}^{\circ}) \; \l x_2. \m\g.(\a \; \{(N_2 \;
  N)\}^{\circ})))$

$= \{(M \; [x_1.(N_1 \; N) , x_2.(N_2 \; N)])\}^{\circ}$.

\item If $(M \; [x_1.N_1 , x_2.N_2] \; \pi_i)
\tr (M \; [x_1.(N_1 \; \pi_i) , x_2.(N_2 \; \pi_i)])$, then

$\{(M \; [x_1.N_1 , x_2.N_2] \; \pi_i)\}^{\circ} =$

$\m \a. (\vp \; ( \m \b. (\vp \; (M^{\circ} \; \l x_1. \m\g.(\b \;
  N_1^{\circ}) \; \l x_2. \m\g.(\b \; N_2^{\circ})))  \; \l y_1.\l y_2. \m\g.(\a \;
  y_i))) $

$\tr_{\m} \ \m \a. (\vp \; \m \b. (\vp \; (M^{\circ} \; \l x_1.
\m\g.(\b \;
  (N_1^{\circ} \;  \l y_1.\l y_2. \m\g.(\a \;
  y_i))) \;$

$ \l x_2. \m\g.(\b \; (N_2^{\circ} \;  \l y_1.\l y_2. \m\g(\a \;
  y_i)))))) $

$\tr_{\r} \ \m \a. (\vp \; (M^{\circ} \; \l x_1. \m\g.(\vp \;
(N_1^{\circ} \; \l
  y_1.\l y_2. \m\g.(\a \; y_i))) \; $

$\l x_2. \m\g.(\vp \; (N_2^{\circ} \; \l
  y_1.\l y_2. \m\g.(\a \; y_i))))) = P$.

and $\{(M \; [x_1.(N_1 \; \pi_i) , x_2.(N_2 \;
\pi_i)])\}^{\circ}=$

$\m \b (\vp \; (M^{\circ} \; \l x_1 \m\g (\b \; \m \a (\vp \;
(N_1^{\circ} \; \l y_1 \l y_2 \m\g (\a \; y_i)))) \; $

$\l x_2 \m\g (\b \; \m \a (\vp \; (N_2^{\circ} \; \l y_1 \l y_2
\m\g (\a \; y_i)))) )) \tr_{\r}^+ P$.

\item If $(M \; [x_1.N_1 , x_2.N_2] \; [y_1.L_1 , y_2.L_2]) \tr$

$(M \; [x_1.(N_1 \; [y_1.L_1 , y_2.L_2]) , x_2.(N_2 \; [y_1.L_1 ,
y_2.L_2])])$, then

$\{(M \; [x_1.N_1 , x_2.N_2] \; [y_1.L_1 , y_2.L_2])\}^{\circ} =$

$\m \a. (\vp \; ( \m \b. (\vp \; (M^{\circ} \; \l x_1. \m\g.(\b \;
  N_1^{\circ}) \; \l x_2. \m\g.(\b \; N_2^{\circ}))) \; \l y_1. \m\g.(\a \;
  L_1^{\circ}) \; \l y_2. \m\g.(\a \; L_2^{\circ}))) $

$\tr_{\m} \ \m \a. (\vp \; \m \b. (\vp \; (M^{\circ} \; \l x_1.
\m\g.(\b \;
  (N_1^{\circ} \;  \l y_1. \m\g.(\a \;
  L_1^{\circ}) \; \l y_2. \m\g.(\a \; L_2^{\circ}))) \; $

$\l x_2. \m\g.(\b \; (N_2^{\circ} \;  \l y_1. \m\g.(\a \;
  L_1^{\circ}) \; \l y_2. \m\g.(\a \; L_2^{\circ})))))) $

$\tr_{\r} \ \m \a. (\vp \; (M^{\circ} \; \l x_1. \m\g.(\vp \;
  (N_1^{\circ} \;  \l y_1. \m\g.(\a \;
  L_1^{\circ}) \; \l y_2. \m\g.(\a \; L_2^{\circ})))) \; $

$\l x_2. \m\g.(\vp \; (N_2^{\circ} \;  \l y_1. \m\g.(\a \;
  L_1^{\circ}) \; \l y_2. \m\g.(\a \; L_2^{\circ})))) = P$.

and $\{(M \; [x_1.(N_1 \; [y_1.L_1 , y_2.L_2]) , x_2.(N_2 \;
[y_1.L_1 ,
    y_2.L_2])])\}^{\circ} = $

$\m \b. (\vp \; (M^{\circ} \; \l x_1. \m\g.(\b \;\m \a.
  (\vp \; (N_1^{\circ} \;  \l y_1. \m\g.(\a \;
  L_1^{\circ}) \; \l y_2. \m\g.(\a \; L_2^{\circ})))) \;$

$\l x_2. \m\g.(\b \; \m \a. (\vp \;(N_2^{\circ} \; \l y_1. \m\g.(\a \;
  L_1^{\circ}) \; \l y_2. \m\g.(\a \; L_2^{\circ})))))) \tr_{\r}^+ P$.

\item If $(\mu \a. M \; N) \tr \mu \a. M[(\a \; L):= (\a \; (L \; N))]$,
  then

$\{(\m \a. M \; N)\}^{\circ} = (\m \a. M^{\circ} \; N^{\circ})
\tr_{\m} \m \a.  M^{\circ}[(\a \; L^{\circ}) := (\a \; (L^{\circ}
\; N^{\circ}))]$

$ = \m \a. M^{\circ}[(\a \; L^{\circ}) := (\a \; \{(L \;
N)\}^{\circ})] = \{\m \a. M[(\a \; L) := (\a \; (L \;
N))]\}^{\circ}$.

\item If $(\m \b. M \; \pi_i) \tr \m \b. M[(\b \; N):= (\b \; (N \;
  \pi_i))]$, then

$\{(\m \b. M \; \pi_i)\}^{\circ} =  \m \a. (\vp \; (\m \b.
M^{\circ} \; \l x_1.\l
  x_2. \m \g. (\a \; x_i)))$

  $ \tr_{\m} \ \m \a. (\vp \; \mu \b.  M^{\circ}[(\b \; N^{\circ}) := (\b
\; (N^{\circ} \; \l
  x_1.\l x_2. \m\g(\a \; x_i)))]) $

$\tr_{\r} \ \m \a.  M^{\circ}[(\b \; N^{\circ}) := (\vp \;
(N^{\circ} \; \l x_1.\l x_2. \m\g.(\a \; x_i)))] = P$.

and $\{\m \b. M[(\b \; N):= (\b \; (N \; \pi_i))]\}^{\circ} =$

$\m \b. M^{\circ}[(\b \; N^{\circ}) := (\b \; \m \a. (\vp \; (N^{\circ}
\; \l x_1.\l x_2. \m\g.(\a \; x_i))))] \tr_{\r}^* P$.

\item If $(\mu \b. M \; [x_1. N_1 ,x_2. N_2]) \tr \mu \b. M[(\b \; N):=
  (\b \; (N \; [x_1. N_1 ,x_2. N_2]))]$, then

$\{(\mu \b. M \; [x_1. N_1 ,x_2. N_2])\}^{\circ} =$

$\m \a. (\vp \; (\m \b. M^{\circ} \; \l x_1. \m\g.(\a \;
N_1^{\circ}) \; \l x_2. \m\g.(\a \; N_2^{\circ})))$

$ \tr_{\m}^+ \ \m \a. (\vp \; \m \b.  M^{\circ}[(\b \;
N^{\circ}):= (\b \; (N^{\circ} \; \l x_1. \m\g.(\a \; N_1^{\circ})
\; \l x_2. \m\g.(\a \; N_2^{\circ})))]) $

$\tr_{\r} \ \m \a.  M^{\circ}[(\b \; N^{\circ}):= (\vp \;
(N^{\circ} \; \l x_1. \m\g.(\a \; N_1^{\circ}) \; \l x_2. \m\g.(\a
\; N_2^{\circ})))] = P$.

and $\{\mu \b. M[(\b \; N):= (\b \; (N \; [x_1. N_1
    ,x_2. N_2]))]\}^{\circ} =$

$\m \b. M^{\circ}[(\b \; N^{\circ}):= (\b \; \m \a. (\vp \; (N^{\circ}
\; \l x_1. \m\g.(\a \; N_1^{\circ}) \; \l x_2. \m\g.(\a \;
N_2^{\circ}))))] \tr_{\r}^* P$.

\end{itemize}
\end{proof}


\begin{thebibliography}{99}

\bibitem{And} Y. Andou. {\em Church-Rosser property of simple
reduction for full first-order classical natural deduction.} Annals of
Pure and Applied logic 119, pp. 225-237, 2003.

\bibitem{peter} P. Battyanyi. {\em Normalization properties of
symmetric logical calculi}. PhD thesis. Universit\'e de
Chamb\'ery. 2007.

\bibitem{DaNo1} R. David and K. Nour. {\em A short proof of the
strong normalization of the simply typed $\l \m$-calculus.}
Schedae Informaticae 12, pp. 27-33, 2003.

\bibitem{DaNo2} R. David and K. Nour. {\em A short proof of the strong
normalization of classical natural deduction with
disjunction}. Journal of Symbolic Logic, vol 68, num 4, pp. 1277-1288,
2003.

\bibitem{DaNo3} R. David and K. Nour. {\em An arithmetical proof of
the strong normalization for the lambda-calculus with recursive
equations on types}. TLCA 2007, LNCS 4583, pp. 84-101, 2007.

\bibitem{JoMa} F. Joachimski and R. Matthes. {\em Short proofs of
normalization for the simply-typed lambda-calculus, permutative
conversions and G\"odel's T}. Archive for Mathematical Logic, 42(1),
pp. 59-87, 2003.

\bibitem{deGr} P. de Groote. {\em Strong normalization of classical
natural deduction with disjunction.} TLCA 2001. LNCS 2044,
pp. 182-196, 2001.


\bibitem{Mat1} R. Matthes. {\em Stabilization - An Alternative to
  Double-Negation Translation for Classical Natural Deduction}.  Logic
  Colloquium 2003, Lecture Notes in Logic, vol. 24, pp. 167-199, 2006.

\bibitem{Mat2} R. Matthes. {\em Non-strictly positive fixed-points for
  classical natural deduction}. Annals of pure and Applied logic 133
  (1-3), pp. 205-230, 2005.

\bibitem{Mend1} N. P. Mendler. {\em Recursive Types and Type
Constraints in Second-Order Lambda Calculus}. LICS, pp. 30-36, 1987.

\bibitem{Mend2} N. P. Mendler. {\em Inductive Types and Type
Constraints in the Second-Order Lambda Calculus}. Annals of pure and
Applied logic 51 (1-2), pp.  159-172, 1991.

\bibitem{NaTa} K. Nakazawa and M. Tatsuta. {\em Strong normalization
of classical natural deduction with disjunctions}. Annals of Pure and
Applied Logic 153 (1-3), pp. 21-37, 2008.

\bibitem{NaSa} K. Nour and K. Saber. {\em A semantical proof of strong
  normalization theorem for full propositional classical natural
  deduction.} Archive for Mathematical Logic, vol 45, pp. 357-364,
  2005.

\bibitem{Par1} M. Parigot. {\em $\l \m$-calculus: An algorithm
  interpretation of classical natural deduction}.  Lecture Notes in
  Artificial Intelligence, vol 624,
pp. 190-201. Springer Verlag, 1992.

\bibitem{Par2} M. Parigot. {\em Proofs of strong normalization for
second order classical natural deduction}. Journal of Symbolic Logic,
vol 62 (4), pp.  1461-1479, 1997.


\bibitem{py} W. Py. {\em Confluence en $\l\m$-calcul}. PhD thesis. Universit\'e de
Chamb\'ery. 1998.

\bibitem{Tats} M. Tatsuta.  {\em Simple saturated sets for disjunction
and second-order existential quantification}. TLCA 2007, LNCS 4583,
pp. 366-380, 2007.

 \bibitem{woj}A. Wojdyga.  {\em Short proofs of strong
 normalization}.\\
http://arxiv.org/abs/0804.2535v1

\end{thebibliography}
\end{document}